\newtheorem{theorem}{Theorem}
\newtheorem{lemma}[theorem]{Lemma}
\newtheorem{definition}[theorem]{Definition}
\newtheorem{claim}[theorem]{Claim}
\newtheorem{constrain}[theorem]{Constraint}
\newtheorem{conjecture}[theorem]{Conjecture}
\newtheorem{question}[theorem]{Question}
\newtheorem{observation}[theorem]{Observation}
\newtheorem{constr}[theorem]{Construction}
\title{An Ore-type theorem for $[3]$-graphs}
\author{Yupei Li
\thanks{University of South Carolina, Columbia, SC 29208, ({\tt yupei@email.sc.edu}).}
 \and Linyuan Lu \thanks{University of South Carolina, Columbia, SC 29208,
({\tt lu@math.sc.edu}).
}
\and Ruth Luo \thanks{University of South Carolina, Columbia, SC 29208, ({\tt ruthluo@sc.edu}).}
}
\newcommand{\LL}[1]{{\textcolor{blue}{#1}}}
\newcommand{\cH}{{\cal H}}
\newcommand{\cP}{{\cal P}}
\newcommand{\cC}{{\cal C}}
\newcommand{\cG}{{\cal G}}
\begin{document}
\maketitle

\begin{abstract}
Ore's Theorem states that if $G$ is an $n$-vertex graph and every pair of non-adjacent vertices has degree sum at least $n$, then $G$ is Hamiltonian. 

A $[3]$-graph is a hypergraph in which every edge contains at most $3$ vertices.
In this paper, we prove an Ore-type result on the existence of Hamiltonian Berge cycles in $[3]$-graph $\cH$, based on the degree sum of every pair of non-adjacent vertices in the $2$-shadow graph $\partial \cH$ of $\cH$.
Namely, we prove that there exists a constant $d_0$ such that for all $n \geq 6$, if a $[3]$-graph $\cH$ on $n$ vertices satisfies that every pair $u,v \in V(\cH)$ of non-adjacent vertices has degree sum $d_{\partial \cH}(u) + d_{\partial \cH}(v) \geq n+d_0$, then $\cH$ contains a Hamiltonian Berge cycle.  Moreover, we conjecture that $d_0=1$ suffices.
\end{abstract}

\section{Introduction}


 A {\em Hamiltonian cycle (path)} in a graph $G$ is a cycle (path) that covers all of the vertices in $G$. We say $G$ is {\em Hamiltonian} if it contains a Hamiltonian cycle. It is well known that determining if a graph is Hamiltonian is an NP-complete problem. Finding sufficient conditions for Hamiltonicity in graphs is one of the most well-studied fields in combinatorics. 
The first extremal result for Hamiltonicity was due to Dirac.

\begin{theorem}[Dirac's Theorem~\cite{D}]
Let $G$ be a graph on $n\geq 3$ vertices. If $\delta(G) \geq n/2$, then $G$ contains a Hamiltonian cycle.
\end{theorem}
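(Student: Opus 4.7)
The plan is the classical longest-path argument: take a longest path and, via a pigeonhole/crossing trick, close it into a spanning cycle.

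First I would verify that $G$ is connected. If $G$ had a component $C$ with $|C| \leq n/2$, then every vertex of $C$ would have degree at most $|C|-1 < n/2$, contradicting $\delta(G) \geq n/2$. Connectedness will be needed only at the very end to extend a non-spanning cycle into a longer path.

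Second, let $P = v_1 v_2 \cdots v_k$ be a longest path in $G$. Maximality forces $N(v_1), N(v_k) \subseteq V(P)$. Set
\[
S = \{\, i : v_1 v_{i+1} \in E(G)\,\}, \qquad T = \{\, i : v_i v_k \in E(G)\,\},
\]
both viewed as subsets of $\{1, \ldots, k-1\}$. Since $|S| = d(v_1) \geq n/2$ and $|T| = d(v_k) \geq n/2$, we have $|S| + |T| \geq n > k - 1 \geq |S \cup T|$, so $S \cap T \neq \emptyset$. For any $i \in S \cap T$, the walk $v_1 v_2 \cdots v_i v_k v_{k-1} \cdots v_{i+1} v_1$ is a cycle $C$ on the vertex set $V(P)$.

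Finally I would show $V(C) = V(G)$: otherwise connectedness gives a vertex $w \notin V(C)$ adjacent to some $v_j \in V(C)$; deleting an edge of $C$ incident to $v_j$ and appending $w$ yields a path on $k+1$ vertices, contradicting maximality of $P$. Hence $k = n$ and $C$ is Hamiltonian. The main obstacle, and the heart of the argument, is the pigeonhole step: one must take care to restrict $S$ and $T$ to $\{1, \ldots, k-1\}$ (not $\{1, \ldots, k\}$) so that $|S| + |T| \geq n$ strictly exceeds $|S \cup T|$, and then verify that a common index genuinely closes the path into a single cycle rather than splitting it into two disjoint paths. The connectedness check and the path-extension step are routine.
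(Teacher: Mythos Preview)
The paper does not give its own proof of Dirac's Theorem; it is quoted in the introduction as a classical result with a citation to~\cite{D}, serving only as background for the paper's Ore-type hypergraph theorem. Your argument is the standard longest-path proof and is correct as written: the connectedness check, the definitions of $S$ and $T$ as subsets of $\{1,\ldots,k-1\}$, the pigeonhole inequality $|S|+|T|\ge n>k-1\ge|S\cup T|$, and the cycle-closing and path-extension steps are all sound.
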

This was later strenghthened by Ore. 
\begin{theorem}[Ore's Theorem~\cite{Ore}]
Let $G$ be a graph on $n\geq 3$ vertices. If $G$ has the property that 
for all pairs of non-adjacent vertices $(u,v)$ $d(u)+d(v)\geq n$, then
$G$ contains a Hamiltonian cycle.
\end{theorem}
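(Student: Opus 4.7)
The plan is to argue by contradiction using the classical \emph{edge-maximal extension / rotation} technique. Suppose $G$ satisfies Ore's degree-sum condition but contains no Hamiltonian cycle. First I would pass to an edge-maximal non-Hamiltonian supergraph $\hat G$ of $G$ on the same vertex set: keep adding any currently-absent edge as long as the resulting graph is still non-Hamiltonian, and let $\hat G$ be the end of this process. Since adding edges only increases degrees, $\hat G$ still satisfies
\[
d_{\hat G}(u) + d_{\hat G}(v) \;\ge\; d_{G}(u) + d_{G}(v) \;\ge\; n
\]
whenever $uv \notin E(\hat G)$. Because $K_n$ is Hamiltonian for $n \ge 3$, there exist non-adjacent $u, v$ in $\hat G$, and by maximality $\hat G + uv$ contains a Hamiltonian cycle, which must traverse the newly added edge $uv$. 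Deleting that edge from such a cycle yields a Hamiltonian $u$--$v$ path in $\hat G$, say $u = x_1, x_2, \ldots, x_n = v$.

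The second step is a short pigeonhole on this path. I would set
\[
A \;=\; \{\, i \in \{1, \ldots, n-1\} : u\, x_{i+1} \in E(\hat G)\,\}, \qquad B \;=\; \{\, i \in \{1, \ldots, n-1\} : x_i\, v \in E(\hat G)\,\},
\]
so that $|A| = d_{\hat G}(u)$ and $|B| = d_{\hat G}(v)$. If some index $i$ lay in $A \cap B$, then
\[
u,\; x_{i+1},\; x_{i+2},\; \ldots,\; x_n = v,\; x_i,\; x_{i-1},\; \ldots,\; x_1 = u
\]
would be a Hamiltonian cycle in $\hat G$, contradicting the non-Hamiltonicity of $\hat G$. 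Hence $A$ and $B$ are disjoint subsets of $\{1, \ldots, n-1\}$, which forces $d_{\hat G}(u) + d_{\hat G}(v) = |A| + |B| \le n - 1$, directly contradicting the Ore bound displayed above.

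The only genuinely delicate point — and the thing I would double-check — is the boundary bookkeeping: we need $A, B \subseteq \{1, \ldots, n-1\}$, and this uses the fact $uv \notin E(\hat G)$ to exclude $n-1$ from $A$ (since $u\, x_n = uv$) and $1$ from $B$ (since $x_1\, v = uv$). Beyond that indexing check no new idea is required; the whole argument is a pigeonhole on the $n-1$ possible "gluing positions" along the Hamiltonian path, and Ore's degree-sum hypothesis is precisely the condition that rules the pigeonhole out. I therefore do not expect any genuine obstacle, only careful attention to endpoints.
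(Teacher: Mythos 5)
Your proof is correct: it is the standard argument for Ore's theorem (pass to an edge-maximal non-Hamiltonian supergraph, extract a Hamiltonian $u$--$v$ path from the cycle forced by adding the missing edge $uv$, and run a pigeonhole on the $n-1$ positions to find a ``crossing'' $i$ with $ux_{i+1}, x_iv \in E(\hat G)$), and your endpoint bookkeeping ($n-1 \notin A$, $1 \notin B$ because $uv \notin E(\hat G)$) is handled properly. The paper itself does not prove this statement --- it is quoted from the literature --- so there is no in-paper proof to compare; note, however, that the same template (edge-maximal counterexample, Hamiltonian Berge path, counting crossings between the two endpoints) is precisely what the authors adapt to the hypergraph setting in Claim~\ref{n-1n-2}.
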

Note that Ore's Theorem directly implies Dirac's Theorem since a graph with minimum degree $n/2$ trivially has pairwise degree sums of at least $n$. 

We will consider analogs of Hamiltonian cycles and paths in hypergraphs. We first present some definitions related to hypergraphs.
A hypergraph $\cH$ is a family of subsets of a ground set. We refer to these subsets as the edges of $\cH$ and the elements of the ground set as the vertices of $\cH$, and use $E(\cH)$ and $V (\cH)$ to denote these sets, respectively. We say $\cH$ is $r$-uniform (an $r$-graph, for short) if every edge of $\cH$ contains exactly $r$ vertices. 
Moreover, if $R$ is a set of integers, an $R$-graph is a hypergraph in which every edge has size in $R$.

Given a hypergraph $\cH$ and vertices $u,v \in V(\cH)$,
the {\em degree} $d_{\cH}(u)$ of $u$ is the number of edges containing $u$, and the {\em co-degree} $d_{\cH}(u,v)$ of $u$ and $v$ is the number of edges containing both of $u$ and $v$. 
We say $u$ and $v$ are {\em adjacent} if their co-degree is at least $1$, otherwise they are non-adjacent.
The {\em neighborhood} of a vertex $u$ is the set $N_{\cH}(u) = \{v: d_{\cH}(u,v) \geq 1\}$. In the above notations when $\cH$ is clear under context, we will omit the subscripts. The \textit{$2$-shadow}(or \textit{shadow}) of $\cH$, denoted by $\partial \cH$, is the simple $2$-uniform graph with $V(\partial \cH)=V(\cH)$ and $uv\in E(\partial \cH)$ if and only if $\{u,v\}\subseteq h$ for some $h\in E(\cH)$ (i.e., $d_{\cH}(u,v)\geq 1$). 
 
A \textit{Berge path} of length $t$ is a collection of $t$ distinct edges $e_1, e_2, \ldots, e_{t} \in E(\cH)$ and $t+1$ distinct vertices $v_1, v_2, \ldots, v_{t+1}\in V(\cH)$ such that $\{v_i, v_{i+1}\} \subseteq e_i$ for each $i\in [t]$. We can view a Berge path as an alternating sequence of vertices and edges $v_1,e_1, v_2, \ldots, e_t, v_{t+1}$ such that each edge contains the incident vertices in the sequence.

Although the edges $e_i$ in $\cP$ may contain vertices outside of $\{v_1, \ldots, v_{t+1}\}$, when we refer to the vertex set of $\cP$ we ignore such vertices and simply define $V(\cP) =\{v_1, \ldots, v_{t+1}\}$ and $E(\cP) = \{e_1, \ldots, e_{t}\}$.

Similarly, a \textit{Berge} cycle of length $t$ is a collection of $t$ distinct edges $e_1, e_2, \ldots, e_t \in E(\cH)$ and $t$ distinct vertices $v_1, v_2, \ldots, v_t \in V(\cH)$ such that $\{v_i, v_{i+1}\} \subseteq e_i$ for every $i\in [t]$ where $v_{t+1} \equiv v_1$. We define $V(\cC) = \{v_1, \ldots, v_t\}$ and $E(\cC) = \{e_1, \ldots, e_t\}$.

A \textit{Hamiltonian Berge cycle (path)} of a hypergrah $\cH$ is a Berge cycle (path) covering all vertices in $\cH$ (i.e., of length $n$ or $n-1$ respectively). We say $\cH$ is {\em Berge-Hamiltonian} if it contains a Hamiltonian Berge cycle.


Coulson and Perarnau proved the following result which implies a version of Dirac's Theorem for sufficiently large hypergraphs.

\begin{theorem}[Coulson-Perarnau~\cite{CP}]
There exists $\mu > 0$ and $n_0 \in \mathbb N$ such that if $n \geq n_0$ and $G$ is an $n$-vertex graph with $\delta(G) \geq n/2$, then any coloring of $E(G)$ in which each color is used at most $\mu n$ times contains a rainbow Hamiltonian cycle.
\end{theorem}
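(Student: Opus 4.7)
The plan is to use the absorbing method, now the standard technique for Dirac-type Hamiltonicity results, adapted to the rainbow setting. In outline: build a short rainbow absorbing path, build a reservoir for connections, build a long rainbow path on the remainder by rainbow Pósa rotations, and close up through the reservoir and absorber.

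First I would construct a rainbow path $P_{\mathrm{abs}}$ of length $o(n)$ with the absorbing property: for every set $S$ of at most $\alpha n$ vertices disjoint from $V(P_{\mathrm{abs}})$, the vertices of $V(P_{\mathrm{abs}})\cup S$ can be rearranged into a rainbow path with the same endpoints. The atom is a local absorber for a single vertex $v$ --- a short rainbow path whose vertices, together with $v$, admit another rainbow path between the same endpoints. The Dirac condition $\delta(G)\geq n/2$ produces $\Omega(n^2)$ candidate absorbers for each vertex; because each color appears at most $\mu n$ times, a random sample followed by an alteration to remove color and vertex conflicts leaves $\Omega(n)$ surviving absorbers per vertex, which can be concatenated via short fresh-color connectors into $P_{\mathrm{abs}}$. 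Next I would choose a random reservoir $R\subseteq V(G)\setminus V(P_{\mathrm{abs}})$ of size $\Theta(n)$ with the property that any bounded number of specified pairs outside $R$ can be linked by internally disjoint rainbow paths of constant length through $R$ using any prescribed set of fresh colors; Dirac-type density together with the sparse color condition makes this a routine probabilistic construction.

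Then, on $V(G)\setminus(V(P_{\mathrm{abs}})\cup R)$ and with the remaining palette, I would run a rainbow version of the Pósa rotation-extension technique to obtain a rainbow path of length $(1-o(1))n$. After deleting the colors and vertices used so far, the induced subgraph still has minimum degree $(1/2-o(1))n$ on its vertex set, and because each of the $O(n)$ already-used colors appears at most $\mu n$ times, only $O(\mu n^2)$ edges are forbidden, so the standard Pósa expansion estimates persist in the rainbow regime. I would then use the reservoir to bridge this long path to the endpoints of $P_{\mathrm{abs}}$ and invoke the absorbing property to swallow the $o(n)$ leftover vertices, closing a rainbow Hamiltonian cycle.

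The main obstacle will be making the Pósa rotations cooperate with the rainbow constraint. A rotation exchanges a path edge at an endpoint for a non-path edge at the same endpoint, and the replacement must carry a color not yet on the path; since a path of length $\Theta(n)$ already uses $\Theta(n)$ colors, one must worry that the endpoint's neighborhood is saturated with repeated colors. The hypothesis that each color appears at most $\mu n$ times is exactly what rules this out and keeps the allowed rotation set of linear size; without it, a single over-represented color could block every rotation at an endpoint. Engineering the random choices in the earlier stages so that they leave enough fresh palette for the rotation stage --- while still yielding an absorber and reservoir with all the required robustness --- is the principal bookkeeping challenge.
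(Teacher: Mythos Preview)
The paper does not prove this theorem at all: it is quoted as a result of Coulson and Perarnau~\cite{CP} and used as background to motivate the hypergraph Dirac-type statements (Theorems~\ref{dirac2} and~\ref{dirac3}). There is therefore no proof in the paper to compare your proposal against.

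As a side remark, your outline is broadly consonant with how such rainbow Dirac results are actually established --- absorber plus reservoir plus a rainbow rotation-extension argument, with the color-multiplicity bound $\mu n$ doing the work of keeping enough fresh palette available at each stage. But none of that machinery appears in the present paper, which is entirely about Berge cycles in $[3]$-graphs and invokes the Coulson--Perarnau theorem only as a black box.
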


To see the application to hypergraphs, for $r = o(\sqrt{n})$, suppose $\cH$ is a $r$-graph whose shadow satisfies $\delta(\partial \cH) \geq n/2$. Given a hyperedge $e \in E(\cH)$, color all edges 
 in $\partial \cH$ between pairs in $e$ with a common color unique to $e$ (if there exists pairs in multiple edges break ties arbitrarily). Then a rainbow Hamiltonian cycle in $\partial \cH$ corresponds to a Hamiltonian Berge cycle as we can map each colored edge to its corresponding hyperedge. In particular, this implies the following Dirac-type Theorem for hypergraphs.

 \begin{theorem}[Coulson-Perarnau~\cite{CP}]\label{dirac2}Let $r = o(\sqrt{n})$, and suppose $\cH$ is an $n$-vertex, $r$-uniform hypergraph. If $\delta(\cH) \geq {\lfloor (n-1)/2 \rfloor \choose r-1} + 1$, then $\cH$ contains a Hamiltonian Berge cycle.
 \end{theorem}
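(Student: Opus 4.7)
The result is obtained by instantiating the Coulson-Perarnau rainbow Hamiltonicity theorem on the 2-shadow $\partial \cH$ with the natural edge-coloring hinted at in the paragraph just above the statement. My plan is to verify that the minimum-degree hypothesis on $\cH$ is strong enough to (i) force $\delta(\partial \cH) \geq n/2$ and (ii) keep every color class below the threshold $\mu n$, and then to lift the resulting rainbow cycle back to $\cH$.

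First, I would derive the shadow degree bound: if some $v \in V(\cH)$ had $|N_{\partial \cH}(v)| \leq \lfloor (n-1)/2 \rfloor$, then every hyperedge through $v$ would be a subset of $\{v\} \cup N_{\partial \cH}(v)$, so there would be at most $\binom{\lfloor (n-1)/2 \rfloor}{r-1}$ such edges. The hypothesis $d_{\cH}(v) \geq \binom{\lfloor (n-1)/2 \rfloor}{r-1}+1$ immediately contradicts this and gives $\delta(\partial \cH) \geq \lfloor (n-1)/2 \rfloor + 1 \geq n/2$.

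Next, I would set up the coloring exactly as described in the excerpt: assign each hyperedge $e \in E(\cH)$ a distinct color $c_e$, and color each edge $uv \in \partial \cH$ by $c_e$ for some hyperedge $e \supseteq \{u,v\}$, breaking ties arbitrarily. Any color $c_e$ is then used on at most $\binom{r}{2}$ edges of $\partial \cH$, and since $r = o(\sqrt{n})$ we have $\binom{r}{2} = o(n) \leq \mu n$ once $n$ is sufficiently large. Applying the rainbow theorem to $\partial \cH$ with this coloring produces a rainbow Hamiltonian cycle $v_1 v_2 \cdots v_n v_1$. By construction the edge $v_i v_{i+1}$ has a unique color $c_{e_i}$ with $\{v_i, v_{i+1}\} \subseteq e_i$, and rainbow-ness guarantees the hyperedges $e_1, \ldots, e_n$ are pairwise distinct; thus $v_1, e_1, v_2, e_2, \ldots, v_n, e_n, v_1$ is a Hamiltonian Berge cycle of $\cH$.

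No step in this argument is combinatorially subtle; the whole content is bookkeeping with the constants. The only step that requires attention is the interaction between the asymptotic assumption $r = o(\sqrt{n})$ and the fixed constant $\mu$ from the rainbow theorem, which determines how large $n_0$ must be chosen; this is the main (and essentially only) obstacle in the proof.
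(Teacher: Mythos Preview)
Your proposal is correct and follows exactly the derivation the paper sketches in the paragraph preceding the theorem: color $\partial\cH$ by hyperedges, bound color classes by $\binom{r}{2}=o(n)\le\mu n$, apply the rainbow Dirac theorem, and lift the rainbow Hamiltonian cycle to a Hamiltonian Berge cycle. You even spell out the one step the paper leaves implicit, namely that $\delta(\cH)\ge\binom{\lfloor(n-1)/2\rfloor}{r-1}+1$ forces $\delta(\partial\cH)\ge n/2$ via the pigeonhole on $\{v\}\cup N_{\partial\cH}(v)$.
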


Later, Kostochka, Luo, and McCourt extended this result to all combinations of $r$ and $n$.

 \begin{theorem}[Kostochka, Luo, McCourt~\cite{KLM}]\label{dirac3}Let $3\leq r \leq n$, and suppose $\cH$ is an $n$-vertex, $r$- uniform hypergraph. If
 (1) $r \leq (n-1)/2$ and $\delta(\cH) \geq {\lfloor (n-1)/2 \rfloor \choose r-1} + 1$, or if (2) $r \geq n/2$ and $\delta(\cH) \geq r$, then $\cH$ contains a Hamiltonian Berge cycle.
 \end{theorem}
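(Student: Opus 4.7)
The plan is to reduce case (1) to Dirac's theorem on the $2$-shadow $\partial\cH$ and then lift the resulting Hamiltonian cycle to a Berge cycle via a system of distinct representatives, and to handle case (2) by a direct construction since the edges and degrees are both very large. For case (1), the first observation is that the minimum degree condition on $\cH$ propagates to its shadow: if a vertex $v$ has shadow-degree $k$, then every hyperedge containing $v$ uses its remaining $r-1$ vertices from $N_{\partial\cH}(v)$, so $d_\cH(v)\leq\binom{k}{r-1}$. The hypothesis $\delta(\cH)\geq\binom{\lfloor(n-1)/2\rfloor}{r-1}+1$ therefore forces $\delta(\partial\cH)\geq\lfloor(n-1)/2\rfloor+1\geq\lceil n/2\rceil$, and Dirac's theorem yields a Hamiltonian cycle $C=v_1v_2\cdots v_nv_1$ in $\partial\cH$.

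The lifting step asks for distinct hyperedges $e_1,\dots,e_n\in E(\cH)$ with $\{v_i,v_{i+1}\}\subseteq e_i$ for every $i$. I would encode this as a bipartite matching between the $n$ edges of $C$ and the hyperedges of $\cH$ and verify Hall's condition. Each hyperedge contributes at most $\binom{r}{2}$ pairs to $C$, while the degree bound places many hyperedges above each vertex of $C$, so a double-counting argument gives $|N(S)|\geq|S|$ for every $S\subseteq E(C)$ unless $S$ concentrates inside a small cluster of hyperedges. In that failure case I would either replace $C$ by a different Hamiltonian cycle in $\partial\cH$ that avoids the bad cluster (using that a graph above the Dirac threshold admits many Hamiltonian cycles, obtainable by successive $2$-switches) or analyze the short list of near-extremal hypergraphs saturating $\binom{\lfloor(n-1)/2\rfloor}{r-1}+1$ --- essentially near-unions of two $r$-cliques --- and build a Berge Hamiltonian cycle by hand.

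For case (2), with $r\geq n/2$ and $\delta(\cH)\geq r$, every hyperedge has size at least $n/2$, every vertex lies in at least $r$ hyperedges, and any two hyperedges meet in at least $2r-n$ vertices. I would construct the Berge cycle directly: start from any vertex $v_1$ with any incident $e_1$, and iteratively pick $v_{i+1}\in e_i\setminus\{v_1,\dots,v_i\}$ together with a fresh hyperedge $e_{i+1}\ni v_{i+1}$ not yet used. Because $|e_i|\geq n/2$ and $d(v_{i+1})\geq r\geq n/2$, both choices exist throughout most of the process; the final few steps, and the closing of $v_n$ back to $v_1$, are handled by a P\'osa-type rotation that rewires a short suffix of the path by exploiting the guaranteed large pairwise intersection of hyperedges to swap an internal vertex onto the endpoint and unlock a usable closing edge.

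The principal obstacle is the lifting step in case (1): Hall's condition is only barely satisfied in the extremal regime, and the bulk of the proof effort goes into either rerouting $C$ or classifying and manually resolving the near-extremal hypergraphs. This is precisely the difficulty that Coulson--Perarnau bypassed for large $n$ via randomness, and extending the result to all $n\geq r\geq 3$ requires a structural treatment of the extremal cases rather than a probabilistic one; the closing rotation in case (2) is the secondary delicate point but is technically lighter because of the abundant edge overlaps.
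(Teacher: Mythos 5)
This statement is quoted from Kostochka--Luo--McCourt and is not proved in the present paper, so your proposal has to stand on its own; as written, it is an outline in which both hard steps are deferred rather than carried out. In case (1) the reduction to the shadow is fine: every hyperedge at $v$ consists of $v$ plus $r-1$ shadow-neighbors, so $d_{\cH}(v)\le\binom{d_{\partial\cH}(v)}{r-1}$ and the hypothesis forces $\delta(\partial\cH)\ge\lfloor (n-1)/2\rfloor+1\ge n/2$, whence Dirac gives a Hamiltonian cycle $C$ in $\partial\cH$. But the lifting of $C$ to a Berge cycle is the entire content of the theorem, and your Hall argument does not get off the ground: a cycle edge $\{v_i,v_{i+1}\}$ is only guaranteed codegree $1$, while a single hyperedge can contain as many as $r-1$ edges of $C$, so the double count only yields $|N(S)|\ge |S|/(r-1)$, far short of Hall's condition. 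Failures are therefore not confined to ``a small cluster'' of hyperedges or to hypergraphs saturating the degree bound, and both proposed repairs --- rerouting $C$ by $2$-switches, or classifying the extremal hypergraphs as ``near-unions of two cliques'' and finishing by hand --- are asserted, not established. In the regime $n$ close to $2r$ one hyperedge meets a constant fraction of $C$, so local rerouting is not obviously available; this is exactly the obstruction that forces the $r=o(\sqrt n)$ hypothesis in the Coulson--Perarnau coloring approach, and it is why the actual proof in~\cite{KLM} works with longest Berge cycles and paths inside $\cH$ rather than lifting a Hamiltonian cycle of the shadow.

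Case (2) has the same problem of deferred difficulty, and there the degree bound $\delta(\cH)\ge r$ is sharp, so no slack-based argument can be waved through. Your greedy construction can fail well before ``the final few steps'': with $r=\lceil n/2\rceil$ and all degrees exactly $r$, a vertex reached after about $n/2$ steps may have every one of its incident hyperedges already used, and $e_i\setminus\{v_1,\dots,v_i\}$ can be empty from the midpoint of the process onward, not just at the end. Moreover, when $r=n/2$ two hyperedges can be disjoint, so the ``guaranteed large pairwise intersection'' on which your closing rotation relies is vacuous precisely at the threshold. An unspecified P\'osa-type patch cannot absorb these failures; as it stands, both halves of the proposal reduce to restating what needs to be proved.
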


For more related results on Berge paths and cycles in hypergraphs, see~\cite{BGHS,CEP,CP,EGMSTZ,FKLdirac,KL1,KLM,MHG} for instance.

Our goal in this paper is to prove a version of Ore's Theorem for hypergraphs. Recall that Ore's Theorem states that if every pair of nonadjacent vertices in an $n$-vertex graph have degree sum at least $n$, then there exists a Hamiltonian cycle.
In generalizing to hypergraphs, one must choose an appropriate analog of degree sum. This turns out to be nontrivial depending on the uniformity or size of the edges in the hypergraph. Consider the following example.

\begin{constr}\label{const1} For an integer $r \geq 3$ let $\cH_r'$ denote the $r$-graph on $n> r$ vertices consisting of an $n-1$ clique $K_{n-1}^{(r)}$ and another vertex $v$ contained in exactly $1$ edge.
\end{constr}

Observe that $d_{\cH_r'}(x)+ d_{\cH_r'}(y) \geq {n-2 \choose r-1} +1$ for all nonadjacent $x$ and $y$ in $\cH_r'$, and $\cH_r'$ is not Berge-Hamiltonian because it contains a vertex with degree $1$.
Using this notion of degree, the degree sum needed for an Ore-type theorem is at least ${n-2 \choose r-1}+2$. Unlike the graph case however, this Ore result would not imply the analogous Dirac-type theorem for hypergraphs, Theorems~\ref{dirac2} and~\ref{dirac3}, when $n$ is large and $r$ is fixed as ${n-2 \choose r-1} + 2$ is much bigger than $2 ({\lfloor (n-1)/2 \rfloor \choose r-1} + 1)$.

We consider another natural notion of degree---the number of neighbors of a vertex (equivalently, the degree in $\partial \cH$).
We propose the following conjecture regarding vertex degrees in the shadow of a $[3]$-uniform hypergraph. Here, $[3]$ refers to the set $\{1, 2, 3\}$. Since edges of size $1$ do not contribute to Berge cycles, our focus is on $\{2,3\}$-uniform hypergraphs. However, for simplicity, we continue to refer to them as $[3]$-uniform hypergraphs, or simply $[3]$-graphs.

\begin{conjecture}\label{conj:ore}  Let $n\geq 6$, and let $\cH$ be a $[3]$-graph on $n$ vertices such that every pair $u,v \in V(\cH)$ of non-adjacent vertices satisfies $d_{\partial \cH}(u) + d_{\partial \cH}(v) \geq n+1$. Then $\cH$ contains a Hamiltonian Berge cycle.
\end{conjecture}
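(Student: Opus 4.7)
The plan is to adapt Ore's classical extension/rotation argument for graphs to the hypergraph setting by working in the shadow graph $\partial \cH$ while carefully bookkeeping which hyperedges of $\cH$ are available to witness each pair along a Berge structure. Since Ore's theorem applied to $\partial \cH$ immediately produces a Hamiltonian cycle in $\partial \cH$ (nonadjacency in $\cH$ coincides with nonadjacency in $\partial \cH$), the entire content of Conjecture \ref{conj:ore} lies in the \emph{lifting}: showing that some Hamiltonian cycle of $\partial \cH$ admits a system of distinct hyperedges, one per consecutive pair.

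I would argue by contradiction using a longest Berge path $\cP = v_1 e_1 v_2 \cdots v_{k-1} e_{k-1} v_k$ in $\cH$. First I would establish $V(\cP) = V(\cH)$: if $w \notin V(\cP)$, every hyperedge witnessing a shadow-edge from $w$ to $v_1$ or $v_k$ must already lie in $\{e_1,\dots,e_{k-1}\}$, for otherwise we could prepend or append to extend $\cP$. Combined with a P\'osa-style rotation that enlarges the set of possible endpoints of longest Berge paths with fixed left endpoint $v_1$, the Ore sum $d_{\partial \cH}(v_1) + d_{\partial \cH}(v_k) \geq n+1$ yields a contradiction. Assuming then that $k=n$, the standard Ore pairing produces at least $d_{\partial \cH}(v_1)+d_{\partial \cH}(v_n)-(n-1) \geq 2$ indices $i$ with both $v_1 v_{i+1} \in E(\partial \cH)$ and $v_i v_n \in E(\partial \cH)$, each yielding a closed shadow walk $v_1 v_2 \cdots v_i v_n v_{n-1} \cdots v_{i+1} v_1$.

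The heart of the proof is realizing one such shadow walk as a genuine Berge cycle: for a chosen $i$ we need distinct hyperedges $f \ni \{v_i, v_n\}$ and $f' \ni \{v_1, v_{i+1}\}$, both outside $\{e_1,\dots,e_{n-1}\} \setminus \{e_i\}$. I would frame this as a bipartite matching problem between cycle-pairs and incident hyperedges and invoke Hall's theorem. The complication unique to hypergraphs is that a $3$-edge $\{a,b,c\} \in E(\cH)$ contributes three pairs to $\partial \cH$ but can witness only one of them in any Berge cycle, so conflicts are structural rather than incidental. The enabling flexibility is two-fold---multiple valid indices $i$, and for many covered pairs multiple hyperedges---so a Hall-defect argument on the joint flexibility should produce a feasible assignment unless $\cH$ has very few $3$-edges relative to $\partial \cH$.

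The principal obstacle will be the tightness of $d_0 = 1$. The paper's main theorem tolerates an additive slack $d_0$ that absorbs losses from enforcing distinctness, but at the conjectured threshold one cannot afford to sacrifice even a single candidate index or hyperedge choice. I expect this forces a stability-style dichotomy: either the rotation-extension machinery combined with Hall's theorem delivers a Berge cycle outright, or $\cH$ is structurally very close to an extremal non-Berge-Hamiltonian configuration---presumably variants of the clique-plus-pendant example of Construction \ref{const1} or near-bipartite shadows---in which case one verifies directly that the sum $n+1$ threshold is violated. Ruling out all such near-extremal $\cH$, while also handling the small-$n$ regime $n \in \{6,7,8\}$ by hand, is where the real difficulty resides and where the gap from the paper's proved constant $d_0$ to the conjectured $d_0 = 1$ must be closed.
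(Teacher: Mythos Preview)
The statement you are attempting is Conjecture~\ref{conj:ore}, which the paper explicitly leaves open; the paper proves only the weaker Theorem~\ref{mainthm} with $d_0=65$. There is therefore no proof in the paper to compare against, and your proposal is, as you yourself indicate in its final paragraph, a plan rather than a proof.

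That said, it is worth contrasting your outline with what the paper does for the weaker result. Your reduction to a Hamiltonian Berge path is essentially what the paper obtains via edge-maximality, and your crossing count at the endpoints mirrors the paper's Claim~\ref{n-1n-2}. From there, however, the strategies diverge. You propose to lift a single shadow Hamiltonian cycle to a Berge cycle by a Hall-type matching between consecutive pairs and witnessing hyperedges. The paper instead works with a longest Berge cycle $\cC$ of length $n-1$ or $n-2$, builds for each outside vertex $u$ a large \emph{usable set} $U_u\subseteq V(\cC)$ of points where $u$ can be inserted with distinct witnessing edges, and then exploits a double-counting of \emph{bridges} (special cycle edges forced by adjacencies among vertices of $U_u$) against the trivial upper bound $|E(\cC)|\le n-1$. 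The slack $d_0=65$ is consumed precisely in making this bridge count exceed $n-1$; nothing in the paper's machinery suggests how to close the gap to $d_0=1$.

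Your proposal has a genuine gap beyond the acknowledged one. The Hall/matching step is the entire content of the problem, and you give no mechanism for verifying Hall's condition. Concretely, even with two crossing indices $i$, the only hyperedges witnessing $\{v_1,v_{i+1}\}$ and $\{v_i,v_n\}$ may be $e_1,e_i,e_{i+1},e_{n-1}$ themselves (as $3$-edges), and these can conflict simultaneously across both candidate indices; the paper's Claim~\ref{n-1n-2} already shows that with only the crude count one may be forced down to a cycle of length $n-2$. Your stability dichotomy is plausible in spirit, but you have not identified the near-extremal structures (Construction~\ref{const1} has $d_{\partial\cH}(u)+d_{\partial\cH}(v)=n$, so it is already excluded at $d_0=1$ and cannot be the obstruction), nor indicated how the single unit of slack over Ore's bound would be leveraged to rule them out.
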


{\bf Remark 1:}  $n+1$ is the best possible: The graph $\cH_3'$ from Construction~\ref{const1} is not Berge Hamiltonian and has $d_{\partial \cH_3'}(u) + d_{\partial \cH_3'}(v) =n$ for any vertex $u$ not adjacent to $v$. 

In this paper, we prove a weaker version of the conjecture. 

\begin{theorem}\label{mainthm}
There exists a constant $d_0 \geq 1$ such that the following statement holds. Let $n\geq 6$, and let $\cH$ be a $[3]$-graph on $n$ vertices such that every pair $u,v \in V(\cH)$ of non-adjacent vertices satisfies $d_{\partial \cH}(u) + d_{\partial \cH}(v) \geq n+d_0$. Then $\cH$ contains a Hamiltonian Berge cycle.
\end{theorem}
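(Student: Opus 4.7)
Since every pair $u,v$ non-adjacent in $\cH$ is a non-edge of $\partial \cH$, the hypothesis $d_{\partial \cH}(u)+d_{\partial \cH}(v)\ge n+d_0>n$ is a strict strengthening of Ore's condition for $\partial \cH$, and hence $\partial \cH$ is Hamiltonian. Proving Theorem~\ref{mainthm} therefore amounts to producing a Hamiltonian cycle $C=v_1 v_2 \cdots v_n v_1$ of $\partial \cH$ that can be ``lifted'' to a Berge cycle, i.e., for which one can choose distinct hyperedges $h_1,\ldots,h_n \in E(\cH)$ with $\{v_i,v_{i+1}\}\subseteq h_i$ for every $i$. For a fixed cycle $C$, this lifting problem is a system-of-distinct-representatives problem on the bipartite containment graph between $E(C)$ and $E(\cH)$, which I would attack with Hall's theorem.

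A first structural observation sharpens the Hall criterion considerably. Since $n\ge 6$ and every hyperedge of $\cH$ has size at most $3$, no hyperedge can cover three edges of $C$, and a hyperedge covers exactly two edges of $C$ only when it is a ``consecutive triple'' $\{v_{i-1},v_i,v_{i+1}\}$ aligned with $C$. Consequently, any Hall-deficient subset $S\subseteq E(C)$ must concentrate on short arcs of $C$ whose edges are witnessed almost entirely by such consecutive triples, a rigid local pattern: along a deficient arc nearly every size-$3$ hyperedge incident to that arc must line up with $C$, and few size-$2$ or ``crossing'' size-$3$ hyperedges are available there.

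The main step, and the expected main obstacle, is turning this local rigidity into either a contradiction or a cycle reroute. The plan is a rotation-extension argument inside $\partial \cH$: using the slack $d_0$ in the Ore condition one extracts chords of $\partial \cH$ spanning a Hall-deficient arc, and each such chord yields a $2$-swap producing a new Hamiltonian cycle $C'$ of $\partial \cH$ with strictly smaller total Hall deficiency. Iterating terminates at a Hamiltonian cycle admitting an SDR, hence a Berge Hamiltonian cycle. The delicate point is that in an extremal deficient configuration, hyperedges of $\cH$ cluster along $C$ in a way that makes most candidate reroutes fail to reduce the deficiency; the constant $d_0$ is ultimately chosen to guarantee that within any deficient arc one has enough chord options to force progress. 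Small or near-extremal examples such as $\cH_3'$ from Construction~\ref{const1} (which attains the tight bound with degree sum $n$ but is not Berge-Hamiltonian) must be excluded by direct inspection, and the precise value of $d_0$ in the theorem is exactly what makes this rotation-extension step unconditional for all $n\ge 6$.
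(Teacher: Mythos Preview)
Your outline is genuinely different from the paper's proof, and the reduction to a system-of-distinct-representatives on a Hamiltonian cycle of $\partial\cH$ is a natural idea. However, the crucial step---``each such chord yields a $2$-swap producing a new Hamiltonian cycle $C'$ of $\partial\cH$ with strictly smaller total Hall deficiency''---is asserted but not argued, and I do not see why it should hold. A $2$-swap removes two cycle edges and inserts two chords; the hyperedges witnessing those chords in $\partial\cH$ may themselves be the unique witnesses for other edges of $C'$ (a single $3$-edge contributes three shadow edges), so the new bipartite system can be just as deficient, or more so. The Ore slack $d_0$ guarantees many chords in $\partial\cH$, but not many \emph{hyperedge-disjoint} chords, and your deficient-arc structure (``nearly every size-$3$ hyperedge incident to that arc lines up with $C$'') is exactly the regime where rerouting tends to reuse the same hyperedges. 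Without a concrete monotone quantity and a lemma showing some swap strictly decreases it, the iteration has no engine. You also need to argue why the process terminates for \emph{every} $n\ge 6$ with a single constant $d_0$; as written, the role of $d_0$ is aspirational rather than quantitative.

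For contrast, the paper does not attempt to lift a shadow cycle at all. It takes an edge-maximal counterexample, obtains a Berge cycle $\cC$ of length $n-1$ or $n-2$, and for $u\notin V(\cC)$ builds a large ``usable set'' $U_u\subseteq V(\cC)$ of vertices whose predecessors connect to $u$ via distinct non-cycle edges. The main lemma classifies $U_u$ by degree (super-small, small, big, super-big) and shows each pair in $U_u$ forces many ``bridges''---cycle edges of a specific aligned form. Since the total number of bridges is at most $|E(\cC)|\le n-1$, a double count over pairs in a suitable subset of $U_u$ gives a contradiction once $d_0$ is large enough (they take $d_0=65$). The decisive idea you are missing is an invariant that must be bounded (here, the bridge count) together with a lower bound forced by the degree hypothesis; your SDR-deficiency is not shown to play that role.
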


We will prove Theorem \ref{mainthm} with $d_0=65$. The constant $n=6$ is best possible: consider
a 3-uniform hypergraph $(V,E)$ (see \cite{LW}) on $5$-vertex and $4$-edges: $V=[5]$, $E=\{\{1,2,3\}, \{1,2,4\}, \{1,2,5\}, \{3,4,5\}\} .$ 
All pairs of vertices are adjacent. Hence the weak Ore Condition with any $d_0$ trivially holds. However, this hypergraph only has $4 < n$ edges and therefore cannot contain a Hamiltonian Berge cycle.

A hypergraph $\cH$ is called {\em covering} if every pair of vertices has co-degree at least 1 (i.e., $\partial \cH$ is a complete graph).   Lu and Wang \cite{LW} proved the following theorem.
\begin{theorem}[Lu-Wang\cite{LW}] \label{thm: Lu-Wang}
Every covering $[3]$-graph $\cH$ on $n \geq 6$ vertices  contains a Hamiltonian Berge cycle.
\end{theorem}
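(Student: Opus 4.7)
The plan is to recast the theorem as a rainbow-matching problem. Fix a Hamilton cycle $C=v_1v_2\cdots v_nv_1$ of $\partial \cH = K_n$; a Berge-lift of $C$ is an injective assignment to the $n$ consecutive pairs $\{v_i,v_{i+1}\}$ of distinct hyperedges of $\cH$, each pair contained in its assigned hyperedge. Since every pair is covered, the only obstruction to a lift is hyperedge reuse, so a Berge-lift of $C$ exists if and only if Hall's condition holds in the natural bipartite graph between the cycle-pairs and $E(\cH)$. Our goal becomes: produce some Hamilton cycle $C$ of $K_n$ for which Hall holds.

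Two preparatory ingredients guide the rest. First, I would show $|E(\cH)|\geq n$ in all cases. Each hyperedge covers at most $\binom{3}{2}=3$ pairs, so the covering condition gives $|E(\cH)|\geq \binom{n}{2}/3$, which already exceeds $n$ whenever $n\geq 7$. For $n=6$ the pigeonhole only yields $5$, but equality would force the $3$-edges of $\cH$ to form a Steiner triple system on $6$ points, which does not exist, so $|E(\cH)|\geq 6$ still. Second, a minimal Hall failure on $C$ has a restricted shape: a deficient set of $k$ cycle-pairs lies in at most $k-1$ hyperedges, so some hyperedge covers at least two cycle-pairs. A hyperedge covering two cycle-pairs is necessarily a $3$-edge $\{a,b,c\}$ with $a,b,c$ consecutive on $C$, and in the smallest case $k=2$ one further needs that neither $\{a,b\}$ nor $\{b,c\}$ lies in any other hyperedge of $\cH$.

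The main step is an iterative repair of the Hamilton cycle. Starting from an arbitrary Hamilton cycle $C$ of $K_n$, if Hall holds we are done; otherwise I would locate a minimal obstruction as above and perform a two-opt swap that replaces a pair of cycle-edges $v_iv_{i+1},v_jv_{j+1}$ by the crossing pair $v_iv_j,v_{i+1}v_{j+1}$, choosing $j$ so that the offending consecutive triple $a,b,c$ is broken and so that the new cycle-pairs have hyperedges outside the obstruction available as representatives. The covering property together with $|E(\cH)|\geq n$ is what guarantees an admissible choice of $j$ for $n\geq 6$. A potential function such as the total Hall deficiency summed over minimal obstructions should strictly decrease under each well-chosen swap.

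The main obstacle will be guaranteeing that the swap never creates a new obstruction of equal or larger size, since naive local swaps can chase the deficiency around the cycle. My plan to overcome this is to run Hopcroft--Karp-style augmenting paths on the bipartite matching side while simultaneously rerouting $C$ on the vertex side, using $|E(\cH)|\geq n$ to certify that each augmenting path terminates. The base case $n=6$ I would dispatch separately by direct enumeration: the minimum covering designs on six points admit a short classification up to isomorphism, and in each I would exhibit an explicit Hamilton Berge cycle; this finite check both anchors the induction and provides the intuition for how to construct the swap in general.
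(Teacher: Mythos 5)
You should first be aware that the paper contains no proof of this statement: it is quoted from Lu and Wang \cite{LW} (Theorem~\ref{thm: Lu-Wang}) and used as a black box, so your argument has to stand on its own as a proof of their theorem rather than be compared with anything in this paper.

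As a proof, the proposal has a genuine gap at its core. The reformulation is fine (a Hamiltonian Berge cycle exists iff some Hamilton cycle of $\partial\cH=K_n$ has its $n$ consecutive pairs saturated by a system of distinct representatives in $E(\cH)$, i.e.\ satisfies Hall's condition), and the count $|E(\cH)|\geq n$ for $n\geq 6$ is correct. But all of the difficulty is concentrated in the ``iterative repair'' step, which is asserted rather than proved. The claim that ``the covering property together with $|E(\cH)|\geq n$ is what guarantees an admissible choice of $j$'' is essentially the theorem itself in disguise, since covering already forces $|E(\cH)|\geq n$ when $n\geq 6$; no argument is offered for why such a $j$ exists. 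The potential function is never defined, and the statement that it ``should strictly decrease'' is precisely the obstacle you yourself identify: a two-opt swap replaces two cycle-pairs by two new pairs whose representative sets may be just as constrained, so the deficiency can simply migrate. The proposed fix via Hopcroft--Karp augmenting paths does not close this, because the bipartite graph in which those paths live changes every time the cycle is rerouted, and $|E(\cH)|\geq n$ certifies nothing about termination of such a coupled process. Finally, the $n=6$ base case would require checking all minimal covering $[3]$-graphs on six vertices (including those containing $2$-edges), not only the minimum covering designs, and a finite base case plus local swaps does not assemble into an induction on $n$ as sketched. In short, the skeleton is plausible, but the decisive lemma --- that \emph{some} Hamilton cycle of $K_n$ satisfies Hall's condition --- is exactly what remains unproved.
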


We only need to prove Theorem \ref{mainthm} for $n\geq d_0+4$:  if $6\leq n\leq d_0+3$ and some pair of vertices $u, v$ are non-adjacent, then $d_{\partial \cH}(u) + d_{\partial\cH}(v) \leq n-2 + n-2 \leq n + d_0 -1$. Thus $\cH$ must be covering, and by Theorem \ref{thm: Lu-Wang},
 $\cH$ contains a Hamiltonian Berge cycle.
Thus for the remainder of the paper, we will assume $n\geq d_0+4$.

\subsection{Proof idea and outline}
Our proof is inspired by classical proof arguments for Hamiltonian cycles in graphs. We give the following motivating example. Suppose $C = v_1, \ldots, v_k, v_1$ is a longest cycle in a graph $G$. If $G$ is not Hamiltonian, then there exists a vertex $u \notin V(C)$. The vertex $u$ cannot have neighbors that appear consecutively in $C$ otherwise we could insert $u$ between them to obtain a longer cycle. Similarly, if $v_i, v_j \in N(u)$, then $v_{i+1}$ and $v_{j+1}$ cannot be adjacent otherwise we can find a longer cycle.

We would like to transfer these ideas to hypergraphs, but we run into various problems due to the larger edges. For instance, if $\cC = v_1, e_1, \ldots, v_k, e_k, v_1$ is a longest Berge cycle and $u \notin V(\cC)$ is adjacent to $v_{i}$ and $v_{i+1}$, then we would {\em like} to add $u$ to the Berge cycle between these vertices but this is not always possible. For example if $u$ is only adajacent to $v_{i}$ and to $v_{i+1}$ via the edge $e_i$, or if $e_{i-1}$ is the only edge connecting $u$ to $v_i$, then we may not be obtain a longer Berge cycle. 

In Section 2, we introduce some definitions and prove that $\cH$ has a Berge cycle $\cC$ of length at least $n-2$. For each vertex $u \notin V(\cC)$, we construct a subset $U_u \subseteq V(\cC)$ called a {\em usable set}. This set has the property that $\{v_{i-1}: v_i \in U_u\}$ is a large subset of the neighorhood of $u$ and for any pair $v_i, v_j$ of such neighbors, there exists distinct edges connecting $u$ to each of these vertices.

We conclude Section 2 with our main lemma, Lemma~\ref{lem:main lem}, which roughly states that all vertices outside of longest Berge cycles have small degree, all but at most $6$ vertices in the usable set $U_u$ have many neighbors {\em but not too many}, and every pair of such vertices creates several {\em bridges} (to be defined in the next section) corresponding to edges in the Berge cycle.

In Section 3, we complete our proof of Theorem~\ref{mainthm} by applying our main lemma. We begin with a longest Berge cycle $\cC$ and show that $\cC$ has several vertices with large degree that produce many bridges. On the other hand, the total number of bridges is at most the number of edges in $\cC$ which yields a contradiction. 

Throughout the proof, we will assume that every pair $u,v \in V(\cH)$ of non-adjacent vertices has degree sum $d_{\partial \cH}(u) + d_{\partial\cH}(v) \geq n + d_0$ for some absolute constant $d_0$. We collect a series of constraints we need $d_0$ to satisfy simultaneously. At the end of our proof we conclude that our choice $d_0 = 65$ suffices.

\section{Notations and lemmas}

For $u,v \in V(\cH)$, we denote by $E(u,v) = \{e \in E(\cH): \{u,v\} \subseteq e\}$ the set of edges in $\cH$ containing both $u$ and $v$. Recall that $\partial \cH$ is the $2$-shadow of $\cH$.
By definition, $N_{\partial \cH}(v) =N_{\cH}(v)$, so we simply refer to this set by $N(v)$. We also have $d_{\partial \cH} (u) = |N(v)|$. Given a Berge cycle $\cC$, denote $N_{\cC}(u) = N(u) \cap V(\cC)$ and $d_{\cC}(u) = |N_{\cC}(u)|$. 


Suppose $\cH$ is an edge-maximal counterexample to Theorem~\ref{mainthm}. That is, $\cH$ is a $[3]$-graph which satisfies $d_{\partial \cH}(u)+d_{\partial \cH}(v) \geq n + d_0$ for all nonadjacent $u,v \in V(\cH)$, $\cH$ contains no Hamiltonian Berge cycle, but adding any edge to $\cH$ creates a Hamiltonian Berge cycle.  Thus $\cH$ contains a Hamiltonian Berge path, 
\[\cP = v_1,e_1, v_2, e_2, \ldots, e_{n-1}, v_n.\]

\begin{claim}\label{n-1n-2} $\cH$ contains a Berge cycle of length $n-1$ or $n-2$.
\end{claim}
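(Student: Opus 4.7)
The plan is to exploit the Hamiltonian Berge path $\cP = v_1, e_1, v_2, \ldots, e_{n-1}, v_n$ given by edge-maximality and attempt to close it, handling the Berge-edge bookkeeping with care. Write $E(\cP) = \{e_1, \ldots, e_{n-1}\}$.

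\textbf{Case 1: $v_1$ and $v_n$ are adjacent.} Let $f \in E(v_1, v_n)$. If $f \notin E(\cP)$, then $\cP + f$ is a Hamiltonian Berge cycle, contradicting that $\cH$ is a counterexample; so $f = e_j$ for some $j$. Since $f \supseteq \{v_1, v_n, v_j, v_{j+1}\}$ and $|f|\leq 3$, two of these four vertices must coincide, which forces $j \in \{1, n-1\}$ and $f$ to be a $3$-edge of the form $\{v_1, v_2, v_n\}$ or $\{v_1, v_{n-1}, v_n\}$. In either case the edge $e_j$ can be re-used as a closing edge: e.g.\ if $e_{n-1}=\{v_1,v_{n-1},v_n\}$, then $v_1, e_1, v_2, \ldots, e_{n-2}, v_{n-1}, e_{n-1}, v_1$ is a Berge cycle of length $n-1$ on $\{v_1, \ldots, v_{n-1}\}$.

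\textbf{Case 2: $v_1$ and $v_n$ are non-adjacent.} Then the Ore-type hypothesis gives $d(v_1)+d(v_n) \geq n+d_0$. Following a Chv\'atal--Erd\H{o}s-style argument, set
\[
A = \{i \in \{1,\ldots,n-1\}: v_{i+1} \in N(v_1)\},
\qquad
B = \{i \in \{1,\ldots,n-1\}: v_i \in N(v_n)\}.
\]
Then $|A|+|B| = d(v_1)+d(v_n) \geq n+d_0$, so $|A\cap B| \geq d_0+1 \geq 2$; moreover every $i \in A \cap B$ satisfies $2 \leq i \leq n-2$ because $v_1, v_n$ are non-adjacent. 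For such $i$ and any choice of $f_1 \in E(v_1,v_{i+1})$, $f_n \in E(v_n,v_i)$, the sequence
\[
v_1, e_1, v_2, \ldots, v_i, f_n, v_n, e_{n-1}, v_{n-1}, \ldots, v_{i+1}, f_1, v_1
\]
is a candidate Hamiltonian Berge cycle. Since $v_1,v_i,v_{i+1},v_n$ are four distinct vertices but every hyperedge has size at most $3$, the sets $E(v_1,v_{i+1})$ and $E(v_n,v_i)$ are disjoint and so $f_1 \neq f_n$ automatically. Hence the construction succeeds (yielding the forbidden Hamiltonian Berge cycle) unless $E(v_1,v_{i+1}) \subseteq E(\cP)\setminus\{e_i\}$ or $E(v_n,v_i) \subseteq E(\cP)\setminus\{e_i\}$. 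A short case check on the possible values of $j$ with $e_j \supseteq \{v_1,v_{i+1}\}$ or $e_j \supseteq \{v_n,v_i\}$ shows that such a containment forces one of the $3$-edges
\[
e_1 = \{v_1,v_2,v_{i+1}\},
\quad
e_{i+1} = \{v_1,v_{i+1},v_{i+2}\},
\quad
e_{i-1}=\{v_{i-1},v_i,v_n\},
\quad
e_{n-1}=\{v_{n-1},v_n,v_i\}.
\]

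Since a Hamiltonian Berge cycle is forbidden, at least one of these four $3$-edges must be present for every $i \in A\cap B$. Any such forced $3$-edge then supplies a closing edge for a Berge cycle of length $n-1$: for instance, if $e_{n-1}=\{v_{n-1},v_n,v_i\}$, the sequence
\[
v_1, e_1, v_2, \ldots, v_{i-1}, e_{i-1}, v_i, e_{n-1}, v_{n-1}, e_{n-2}, \ldots, v_{i+1}, f_1, v_1
\]
is a Berge cycle of length $n-1$ on $V(\cH)\setminus\{v_n\}$ whenever $f_1 \in E(v_1,v_{i+1})\setminus E(\cP)$; if no such $f_1$ exists then one of $e_1, e_{i+1}$ is also a forced $3$-edge and a further rerouting using it gives a Berge cycle of length $n-2$. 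The main obstacle is this last step of bookkeeping, where one must combine the abundance $|A\cap B|\geq 2$ with the rigidity of the allowed forced $3$-edges to guarantee that some choice of index $i$ and closing edge simultaneously avoids all edge-collisions and yields $n-1$ or $n-2$ pairwise distinct Berge edges.
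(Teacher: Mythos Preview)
Your approach is exactly the paper's: split on whether $v_1,v_n$ are adjacent, and in the nonadjacent case count crossings along the Hamiltonian Berge path. Case~1 is complete and identical to the paper. In Case~2 you correctly isolate the four possible ``forced $3$-edges'' $e_1,e_{i+1},e_{i-1},e_{n-1}$. The gap is precisely where you say it is: you never carry out the final bookkeeping, and your stated lower bound $|A\cap B|\geq 2$ is one short of what the paper actually uses.

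The missing step is short once you see it. Observe that $e_1$ contains at most one vertex $v_{i+1}$ with $i\in A\cap B$ (since $i\geq 2$ forces $e_1=\{v_1,v_2,v_{i+1}\}$), and likewise $e_{n-1}$ contains at most one $v_i$. Hence at most two indices in $A\cap B$ can have $E(v_1,v_{i+1})=\{e_1\}$ or $E(v_n,v_i)=\{e_{n-1}\}$. Using $|A\cap B|\geq d_0+1\geq 3$ (this is where Constraint~\ref{constrain0} enters), pick $i$ avoiding both, and then choose $f_1\neq e_1$, $f_n\neq e_{n-1}$. Now the only remaining obstructions are $f_1=e_{i+1}=\{v_1,v_{i+1},v_{i+2}\}$ and $f_n=e_{i-1}=\{v_{i-1},v_i,v_n\}$; by symmetry say $f_1=e_{i+1}$. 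Reroute $v_1$ directly to $v_{i+2}$ via $e_{i+1}$: if $f_n\neq e_{i-1}$ this yields the $(n-1)$-cycle $v_1,e_{i+1},v_{i+2},\dots,v_n,f_n,v_i,e_{i-1},\dots,v_1$ omitting $v_{i+1}$; if also $f_n=e_{i-1}$, reroute $v_n$ to $v_{i-1}$ as well to get the $(n-2)$-cycle $v_1,e_{i+1},v_{i+2},\dots,v_n,e_{i-1},v_{i-1},\dots,v_1$ omitting $\{v_i,v_{i+1}\}$. That finishes the argument. (Minor point: in your $e_{n-1}$-example you require $f_1\notin E(\cP)$, but $f_1=e_i$ is also fine since $e_i$ is the one path edge not reused.)
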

\begin{proof}
First consider the case that $v_1$ and $v_n$ are adjacent. Let $e \in E(v_1,v_n)$. If $e \notin E(\cP)$, then \[v_1,e_1, v_2, e_2, \ldots, v_n, e, v_1\] is a Hamiltonian Berge cycle, a contradiction to $\cH$ being an edge-maximal counterexample. Thus, $e \in E(\cP)$. Since $|e|\leq 3$, we must have either $e = e_{1} = \{v_1, v_2, v_n\}$ or $e = e_{n-1} = \{v_1, v_{n-1}, v_n\}$.  Without loss of generality, assume $e=e_1=\{v_1,v_2,v_n\}$. Then we observe that $v_2,e_2,\ldots,v_n,e,v_2$ is a Berge cycle of length $n-1$, as desired.

Now suppose that $v_1$ and $v_n$ are non-adjacent. Then $d_{\partial \cH}(v_1)+d_{\partial \cH}(v_n)\geq n+d_0$. Define $N^-(v_1)=\{v_{i-1}: v_i \in N(v_1)\}$, and observe that $N^-(v_1) \subseteq \{v_1, \dots, v_{n-2}\}$ and $N(v_n) \subseteq \{v_2, \ldots, v_{n-1}\}$, so $|N^-(v_1) \cup N(v_n)| \leq n-1$. 

Since $|N^-(v_1)|+|N(v_n)|=d_{\partial \cH}(v_1)+d_{\partial \cH}(v_n)\geq n+d_0$, there exists  at least $d_0+1$ indices $i \in [n]$ such that $v_{i-1} \in N^-(v_1) \cap N(v_n)$. That is $v_i \in N(v_1)$ and $v_{i-1} \in N(v_n)$. For such an index $i$, we say $v_1$ and $v_n$ have a {\em crossing} at $i$. 

For such $i$, take $e \in E(v_1,v_i)$ and $ e' \in E(v_n,v_{i-1})$. Note that since $v_1$ and $v_n$ are non-adjacent, $e \neq e'$.  If $e,e' \notin E(\cP)-\{e_{i-1}\}$, then \[v_1, e, v_i, e_i, \ldots ,v_n, e', v_{i-1}, e_{i-2}, \ldots,v_1\] is a Hamiltonian Berge cycle, a contradiction.

So we may assume that for every $i$ such that $v_1$ and $v_n$ have a crossing at $i$, at least one of $e$ and $e'$ belong to $E(\cP)-\{e_{i-1}\}$. Note that since $|e|, |e'| \leq 3$, if $e \in E(\cP) - \{e_{i-1}\}$ then $e \in \{e_1, e_{i}\}$, and if $e' \in E(\cP) - \{e_{i-1}\}$, then $e' \in \{e_{n-1}, e_{i-2}\}$. Since there are at least $d_0+1 \geq 3$ indices $i$ to choose from, we may choose $i$ and edges $e, e'$ such that $e \neq e_1$ and $e' \neq e_{n-1}$.
Here we assume 
\begin{constrain}\label{constrain0}
    $d_0 \geq 2$.
\end{constrain}
Without loss of generality, we may assume $e = e_i$. If $e' \neq e_{i-2}$, we obtain the Berge cycle \[v_1, e_i, v_{i+1}, e_{i+1}, \ldots, v_n, e', v_{i-1}, e_{i-2}, \ldots, v_1\] which has length $n-1$. If $e' = e_{i-2}$, we instead obtain the Berge cycle of length $n-2$ \[v_1, e_{i}, v_{i+1}, e_{i+1}, \ldots, v_n, e_{i-2}, v_{i-2}, e_{i-3}, \ldots, v_1.\]

\end{proof}

\begin{definition}
We say a Berge cycle $\cC$ is a {\bf maximal Berge cycle} if there does not exist a Berge cycle $\cC'$ with $V(\cC) \subsetneq V(\cC').$    
\end{definition}

Let $\cC=v_1,e_1, \ldots, v_\ell, e_\ell, v_1$ be a Berge cycle. When referring to vertices $v_i$ in $\cC$, all indices will be written modulo $|V(\cC)|$. We view $\cC$ as oriented {\em clockwise}. For a set $A \subseteq V(\cC)$ and an integer $k \in \mathbb Z$, we denote the set $A^{+k} = \{v_{i+k}: v_i \in A\}$. For simplicity, we will use $A^+$ and $A^-$ to denote the special cases $k=1$ and $k=-1$, respectively.

For three distinct vertices $v_i, v_j, v_k \in V(\cC)$, we write
\[v_i \to v_j \to v_k\]
to mean that $v_j$ appears between $v_i$ and $v_k$ in the clockwise segment of $\cC$ from $v_i$ to $v_k$. 
For example, $v_1 \to v_5 \to v_6$ and $v_{6} \to v_8 \to v_1$. We similarly define the relation $v_{i_1} \to v_{i_2} \to \ldots \to v_{i_t}$ for any set of $t \geq 3$ vertices in $V(\cC)$.


\begin{definition}Let $\cC=v_1, e_1, \ldots, v_\ell, e_{\ell}, v_1$ be a Berge cycle, and let $u \in V(\cH) - V(\cC)$. 
A set $U_u \subseteq V(\cC)$ is {\bf usable} for $u$ if 
\begin{enumerate}
    \item for any $v_i\in U_u$, 
    there exists an edge $e(u,v_{i-1}) \in E(u, v_{i-1}) - \{e_{i-2}\}$. In particular, this implies $v_{i-1} \in N(u)$.
    \item for any pair of vertices $v_i, v_j\in U_u$, we have $|i-j| \geq 2$. 
   \item for any pair of vertices $v_i, v_j\in U_u$, there exist edges $e(u,v_{i-1}) \in E(u, v_{i-1}) - \{e_{i-2}\}$ and $e(u, v_{j-1})\in E(u, v_{j-1}) - \{e_{j-2}\}$ such that $e(u,v_{i-1}) \ne e(u, v_{j-1})$.
\end{enumerate}
\end{definition}

\begin{lemma}\label{lem: usableset}For any $u \in V(\cH) - V(\cC)$, there exists a set $U_u$ that is usable for $u$ with $|U_u| \geq d_\cC(u)/6$.
\end{lemma}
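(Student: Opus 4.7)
The plan is to build $U_u \subseteq A^{+1}$, where $A := N_{\cC}(u)$, by sequentially addressing the three defining conditions and losing only a bounded constant factor at each step, ultimately yielding $|U_u| \geq |A|/6 = d_{\cC}(u)/6$.

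For condition (1), call $v_i \in A^{+1}$ \emph{bad} if $E(u,v_{i-1}) = \{e_{i-2}\}$. A key observation is that bad vertices cannot be consecutive on $\cC$: if $v_i$ is bad then $e_{i-2}$ contains $u$ (forcing $e_{i-2} = \{v_{i-2}, v_{i-1}, u\}$), while $e_{i-1}$ cannot contain $u$ (else $e_{i-1} \in E(u,v_{i-1})$, violating the singleton condition); but $v_{i+1}$ being bad would require $u \in e_{i-1}$, a contradiction. Hence any two bad vertices have cyclic distance at least $2$ on $\cC$.

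To handle conditions (1) and (2) together, I would decompose $A$ into its maximal runs $R_1, \ldots, R_p$ of consecutive vertices on $\cC$; correspondingly, $A^{+1}$ decomposes into shifted runs $R_i^{+1}$ of the same lengths, separated by at least one vertex in each gap. Within a run $R^{+1} = \{v_{a+1}, \ldots, v_{a+k}\}$, I would apply the greedy algorithm that processes vertices in cyclic order and picks each non-bad vertex whose index is at least $2$ greater than that of the last pick. The first vertex $v_{a+1}$ is automatically non-bad (since $v_{a-1} \notin A$) and is picked. Between two consecutive picks, the greedy skips at most two vertices---one for proximity and at most one for being bad (using non-consecutivity)---so consecutive picks lie at cyclic distance at most $3$, yielding at least $\lceil k/3 \rceil$ picks per run. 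Picks from different runs are automatically at cyclic distance $\geq 2$ because of the gap in $A^{+1}$ between them. Summing over runs produces a set $S \subseteq A^{+1}$ with $|S| \geq |A|/3$ satisfying conditions (1) and (2); the cyclic case $A = V(\cC)$ is essentially the same, beginning the greedy at any non-bad vertex.

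Finally, for condition (3), I would show that the conflict graph on $S$---with an edge between $v_i$ and $v_j$ whenever condition (3) fails for this pair---has maximum degree at most $1$, i.e., is a matching. A pair $v_i, v_j$ violates condition (3) iff $E(u,v_{i-1}) - \{e_{i-2}\} = E(u,v_{j-1}) - \{e_{j-2}\} = \{e\}$ for a single common edge $e$; since $e$ must contain $u, v_{i-1}, v_{j-1}$ and has size at most $3$, it is forced to equal $\{u, v_{i-1}, v_{j-1}\}$. Consequently, each $v_i \in S$ has at most one conflict partner (the $+1$-shift of the third vertex of the unique edge in $E(u,v_{i-1}) - \{e_{i-2}\}$), so the conflict graph has maximum degree $\leq 1$. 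Extracting a maximum independent set of this matching loses at most a factor of $2$, giving $U_u \subseteq S$ with $|U_u| \geq |S|/2 \geq |A|/6$. The most delicate step is the per-run bound of $\lceil k/3 \rceil$, which rests crucially on the non-consecutivity of bad vertices established in the first step.
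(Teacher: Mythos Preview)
Your proof is correct and follows essentially the same two-step approach as the paper: first extract a set of size at least $d_{\cC}(u)/3$ satisfying conditions (1) and (2) using the key observation that bad vertices are non-consecutive, then halve it via the matching argument for condition (3). The only cosmetic difference is in the first step, where the paper explicitly collects the (necessarily good) predecessors of bad vertices and separately thins the remaining good vertices, rather than running a single greedy pass on runs as you do; the underlying idea and the resulting bound are identical.
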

\begin{proof}
 First we prove the following. 
\begin{equation}\label{U0}\mbox{There exists a set $U_0$ of size at least $d_\cC(u)/3$ satisfying parts 1 and 2 in the definition of usable.}
\end{equation}

Call a vertex $v_r \in N_\cC(u)$ {\em bad} if $E(u, v_{r})=\{e_{r-1}\}$, otherwise call it {\em good}. If there are no bad vertices, then choose a largest subset $N \subseteq N_{\cC}(u)$ that does not contain a pair of consecutive vertices in $\cC$. If $N_{\cC}(u) \neq V(\cC)$, then $|N| \geq d_{\cC}(u)/2$, otherwise $|N| \geq \lfloor |\cC|/2 \rfloor$. The set $U_0 = \{v_r: v_{r-1} \in N\}$ satisfies~\eqref{U0}.

Now suppose there are $b \geq 1$ bad vertices and $g$ good vertices in $N_\cC(u)$.
For a bad vertex $v_r$, we have $e_{r-1}=\{v_{r-1},v_r,u\}$. Therefore $v_{r-1}$ is good because $e_{r-1} \in E(u, v_{r-1})-\{e_{r-2}\}$. Letting $U' = \{v_{r-1}: v_r$ is bad$\}$, we obtain that $|U'| = b$, and $U'$ contains no pair of consecutive vertices in $\cC$, otherwise some vertex is both bad and good.

Let $N' = N_\cC(u) - \{\{v_r, v_{r-1}, v_{r-2}\}: v_r \text{ is bad}\}$. 
Then we can find a subset $N''\subseteq N'$ of size at least $|N'|/2$ that contains no consecutive vertices in $\cC$.  Observe $U_0 = \{v_r: v_{r-1} \in U' \cup N''\}$ satisfies parts 1 and 2 in the definition of usable. We will show that $|U_0| = |U'| + |N''| \geq d_{\cC}(u)/3$. 

If $|U'| \geq d_{\cC}(u)/3$ then we are done, so suppose $|U'| = b < d_{\cC}(u)/3$. 
Then
\[|U'| + |N''| \geq b + \frac{d_{\cC}(u) - 3b}{2} > \frac{d_{\cC}(u)}{3}.\]This proves~\eqref{U0}.
Next suppose some pair $v_{i}, v_{j} \in U_0$ does not satisfy part $3$ in the definition of usable. The sets $E(u, v_{i-1}) - \{e_{i-2}\}$ and $E(u, v_{j-1}) - \{e_{j-2}\}$ are nonempty. We say $\{i,j\}$ is a bad pair if each of these sets contain a single edge, and the edges are the same (i.e., $\{u, v_{i-1}, v_{j-1}\}$). The bad pairs must then be disjoint, and we arbitrarily delete one vertex from each bad pair from $U_0$. The resulting set $U_u$ is usable for $u$ with size $|U_u| \geq |U_0|/2 \geq d_{\cC}(u)/6$.
\end{proof}


\begin{claim}\label{uadjacent} Suppose $\cC$ is a maximal Berge cycle and $u \in V(\cH \setminus \cC)$. For any $v_i, v_j \in U_u$, if $v_i$ and $v_j$ are adjacent, then then $E(v_i, v_j) \subseteq \{e_i, e_j\}$.
\end{claim}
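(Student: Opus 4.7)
The plan is to prove the claim by contradiction. Suppose, for some pair $v_i,v_j\in U_u$ (without loss of generality the clockwise order on $\cC$ is $v_i\to v_j$), there exists $f\in E(v_i,v_j)\setminus\{e_i,e_j\}$. By parts $1$ and $3$ of the definition of a usable set, I pick distinct edges $e(u,v_{i-1})\in E(u,v_{i-1})-\{e_{i-2}\}$ and $e(u,v_{j-1})\in E(u,v_{j-1})-\{e_{j-2}\}$. I splice $u$ into $\cC$ through the jumpers $e(u,v_{i-1})$ and $e(u,v_{j-1})$ at the predecessor vertices $v_{i-1}$ and $v_{j-1}$, using the chord $f$ to join the two resulting arcs:
\[
\cC'=u,\,e(u,v_{i-1}),\,v_{i-1},\,e_{i-2},\,v_{i-2},\,\ldots,\,e_j,\,v_j,\,f,\,v_i,\,e_i,\,v_{i+1},\,\ldots,\,e_{j-2},\,v_{j-1},\,e(u,v_{j-1}),\,u.
\]
This alternating sequence visits every vertex of $V(\cC)\cup\{u\}$ exactly once. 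If the $\ell+1$ listed edges are pairwise distinct, then $\cC'$ is a Berge cycle whose vertex set strictly contains $V(\cC)$, contradicting the maximality of $\cC$.

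The rest of the argument is the distinctness verification, which I would split into three pieces. First, the two arcs use the disjoint index sets $\{e_{i-2},\ldots,e_j\}$ and $\{e_i,\ldots,e_{j-2}\}$, a subset of $E(\cC)\setminus\{e_{i-1},e_{j-1}\}$, so all arc edges are automatically distinct from one another. Second, the chord $f$ avoids the arcs: if $f=e_k$ then $\{v_i,v_j\}\subseteq e_k$, and combined with $|e_k|\leq 3$, $|i-j|\geq 2$ (part $2$ of usability), and $f\neq e_i,e_j$, a short case check forces $k\in\{i-1,j-1\}$, both of which lie outside the two arcs. Third, the jumpers avoid the arcs and do not equal $f$ or each other: if $e(u,v_{i-1})=e_k$ then $u,v_{i-1}\in e_k$, and together with $u\notin V(\cC)$ and $|e_k|\leq 3$ this forces $k\in\{i-2,i-1\}$; the case $k=i-2$ is excluded by the usable definition, and $k=i-1$ lies outside the arcs, so no conflict arises. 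The equality $e(u,v_{i-1})=f$ is impossible because $\{u,v_{i-1},v_i,v_j\}$ consists of four distinct vertices (here I use $u\notin V(\cC)$ and the cyclic-distance condition to rule out $v_j=v_{i-1}$), which cannot all fit in an edge of size at most $3$. A symmetric analysis handles $e(u,v_{j-1})$, and $e(u,v_{i-1})\neq e(u,v_{j-1})$ holds directly by part $3$ of usability.

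I expect the main obstacle to be this bookkeeping. Because $\cH$ is $[3]$-uniform, a single hyperedge can accidentally contain both a chord-endpoint and a cycle-endpoint, so several coincidences that would be impossible in $r$-uniform hypergraphs for $r\geq 4$ are genuinely possible here, and each has to be ruled out using the precise exclusions baked into the definition of a usable set. Once the distinctness check goes through, the contradiction with the maximality of $\cC$ is immediate, and the claim follows.
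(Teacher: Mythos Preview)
Your proof is correct and takes essentially the same approach as the paper: the Berge cycle you construct is identical to the paper's (just written starting at $u$ rather than $v_j$), and the paper likewise argues $f\neq e(u,v_{i-1}),e(u,v_{j-1})$ via the four-distinct-vertices observation using $|i-j|\geq 2$. Your edge-distinctness bookkeeping is more explicit than what the paper spells out, but the underlying argument is the same.
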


\begin{proof}Suppose $f \in E(v_i,v_j)$ and $f \notin \{e_i, e_j\}$. Let $e(v_{i-1}, u) \in E(v_{i-1}, u)-\{e_{i-2}\}$ and $e(v_{j-1}, u) \in E(v_{j-1}, u)-\{e_{j-2}\}$ be distinct edges. Then $f \neq e(v_{i-1}, u)$ otherwise $\{v_{i-1}, v_i, v_j, u\} \subseteq f$, contradicting that $|f| = 3$. Here we use that $|j - i| \geq 2$ by the definition of usable. Similarly, $f \neq e(v_{j-1}, u)$. 

The Berge cycle
\[v_j, e_j, \ldots, v_{i-1}, e(v_{i-1}, u), u, e(v_{j-1}, u), v_{j-1}, e_{j-2}, \ldots, v_i, f, v_j\] has vertex set $V(\cC)\cup \{u\}$, contradicting the maximality of $\cC$. (See Figure \ref{fig:lem1path}.) 
\end{proof}


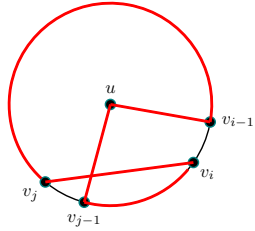
\begin{figure}[htb]
\centering
\resizebox{3.5cm}{!}{
\begin{tikzpicture}[thick]
\def\r{2} 
\draw (0,0) circle(\r)
(0,0)--(-10:\r)
(0,0)--(-105:\r)
(-130:\r)--(-35:\r);

\path
(0,0) node [above=1mm] (v1) {$u$}
(-35:\r) node[below right]{$v_i$}
(-130:\r) node[below left]{$v_j$}
(-10:\r) node[right=1mm]{$v_{i-1}$}
(-105:\r)  node[below=1mm]{$v_{j-1}$};

\foreach \point in {(0,0),(-35:\r),(-130:\r),(-10:\r),(-105:\r)}
\draw[teal,fill=black] \point circle(.1);
\draw[red,ultra thick] (-105:\r) arc (-105:-40:2);
\draw[red,ultra thick] (-10:\r) arc (-10:227:2);

\draw[red,ultra thick] (0,0) -- (-10:\r);

\draw[red,ultra thick] (0,0) -- (-105:\r);
\draw[red,ultra thick] (-130:\r)--(-35:\r) ;
\end{tikzpicture}}
\caption{$v_i, v_j \in U_u$ and $f \notin \{e_i, e_j\}$.}\label{fig:lem1path}
\end{figure}

We divide the vertices of $\cH$ into the following sets based on their degrees. We refer to these sets as the super small, small, big, and super big vertices, respectively. 
The constants $\gamma_1$ and $\gamma_2$ in the definitions will be chosen later.

\[S'(\cC)=\{v: d_\cC(v)< \frac{n}{4} +\gamma_1\}, \qquad 
S(\cC)=\{v: d_\cC(v)< \frac{n+d_0}{2}-3\}\]
\[B(\cC)=\{v: d_\cC(v) \geq \frac{n+d_0}{2}-3\}, \qquad B'(\cC)=\{v: d_\cC(v)\geq \frac{3n}{4}+\gamma_2\}.\]

When $\cC$ is clear from the context, we let $S'(\cC)=S', S(\cC)=S, B(\cC)=B, B'(\cC)=B'$. 

We require $S'\subseteq S$ and $B' \subseteq B$ for $n \geq d_0+4$, that is
\[\frac{n}{4} +\gamma_1 \leq \frac{n+d_0}{2}-3 \qquad \mbox{and} \qquad \frac{n+d_0}{2}-3 \leq \frac{3n}{4}+\gamma_2.\]
It is sufficient to assume
\begin{constrain}\label{constrain00}
$4 \gamma_1+8 \leq 3 d_0$ and $d_0 \leq 4 \gamma_2+16$.
\end{constrain}

Observe that for any pair of small vertices, $u, v \in S (\supseteq S')$, if $|V(\cC)| \geq n-3$, 

\[d_{\partial\cH}(u) + d_{\partial\cH}(v) \leq d_{\cC}(u) + 3 + d_{\cC}(v) + 3 < \frac{n+d_0}{2} + \frac{n+d_0}{2}.\]
It follows that
\begin{equation}\label{small}
\mbox{if $|V(\cC)| \geq n-3$, then all vertices in $S$ are pairwise adjacent.}
\end{equation}

\begin{definition}For indices $i,j,k$, we say an edge $e$ is an {\bf $(i,j;k)$-bridge} if
\begin{itemize}
    \item $v_i \to v_k \to v_j$ and $e = e_{k+1} = \{v_i, v_{k+1}, v_{k+2}\}$ or $e =e_{k-1}= \{v_j, v_{k}, v_{k-1}\}$, or
    \item  $v_j \to v_k \to v_i$ and $e =e_{k-1}= \{v_i, v_k, v_{k-1}\}$ or $e =e_{k+1}= \{v_j, v_{k+1}, v_{k+2}\}$.
\end{itemize}



The {\bf bridges produced by the pair $\{v_i, v_j\}$} is the set $R(v_i,v_j)$ of all the $(i,j;k)$-bridges for any $k \in [n] - \{i,j\}$.
\end{definition}

Since each bridge corresponds to an edge in $\cC$, if $\cC$ is not Hamiltonian, 
\begin{equation}\label{numbridges}
    \bigg|\bigcup_{\{v_i,v_j\}\subset V(\cC)} R(v_{i},v_{j})\bigg| \leq |E(\cC)| \leq n-1.
\end{equation}



The same bridge $e$ with $v_i \in e$ may be produced by multiple pairs $\{v_i,v_j\}$, $j\in [n]- \{i\}$. Hence we have the following observation.
\begin{observation}\label{bridgeproduced}
For any set $X \subseteq V(C)$, a bridge $e$ can be produced by at most $|X|-1$ pairs of vertices in $X$.
\end{observation}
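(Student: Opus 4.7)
The strategy is to identify, for each bridge $e$, a single distinguished vertex $w$ determined by $e$ alone that must appear in every producing pair, and then bound the number of pairs in $X$ containing $w$.

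Concretely, any bridge $e$ is an edge $e_m$ of $\cC$ with $|e_m|=3$, since each of the four subcases in the definition exhibits $e$ as a triple. Therefore we may write $e_m = \{v_m, v_{m+1}, w\}$ for a unique third vertex $w$. Inspecting each of the four subcases of the $(i,j;k)$-bridge definition---namely $e_{k+1} = \{v_i, v_{k+1}, v_{k+2}\}$, $e_{k-1} = \{v_j, v_k, v_{k-1}\}$, $e_{k-1} = \{v_i, v_k, v_{k-1}\}$, and $e_{k+1} = \{v_j, v_{k+1}, v_{k+2}\}$---the ``extra'' third vertex in the edge (the one other than the two consecutive cycle vertices bounding $e_m$) is always either $v_i$ or $v_j$. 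This forces $w \in V(\cC)$, and moreover $w \in \{v_i, v_j\}$ whenever the pair $\{v_i, v_j\}$ produces $e_m$ as a bridge.

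The count is then immediate. Every pair $\{v_i, v_j\} \subseteq X$ that produces $e_m$ contains the fixed vertex $w$, so the number of such pairs is at most the number of pairs in $X$ through $w$, which is at most $|X|-1$ (and is $0$ when $w \notin X$). This establishes the bound.

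I do not anticipate any real obstacle here: the substance is entirely the observation that the four subcases of the bridge definition all pin down the same distinguished vertex $w$, so each bridge edge carries a canonical vertex that is forced into every producing pair.
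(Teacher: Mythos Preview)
Your argument is correct and is precisely the reasoning the paper indicates in the sentence immediately preceding the observation: a bridge $e=e_m=\{v_m,v_{m+1},w\}$ carries a distinguished third vertex $w$, and every producing pair must contain $w$, whence at most $|X|-1$ pairs in $X$ can produce it. You have simply written out explicitly what the paper leaves as a one-line remark.
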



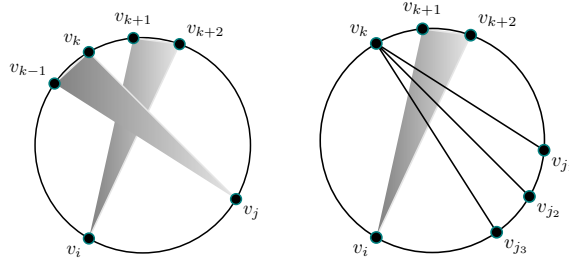
\begin{figure}[htb]
\centering
\resizebox{3.5cm}{!}{\begin{tikzpicture}[thick]
\def\r{2} 
\draw (0,0) circle(\r);

\shadedraw[left color=black!50!white, right color=black!10!white, draw=black!10!white] (-120:\r)--(70:\r)--(95:\r);

\shadedraw[left color=black!50!white, right color=black!10!white, draw=black!10!white] (-30:\r)--(120:\r)--(145:\r);

\path
(70:\r) node[above right]{$v_{k+2}$}
(95:\r) node[above=1mm]{$v_{k+1}$}
(145:\r) node[above left]{$v_{k-1}$}
(120:\r) node[above left]{$v_k$}
(-30:\r) node[below right]{$v_j$}
(-120:\r) node[below left]{$v_i$};

\foreach \point in {(-30:\r),(-120:\r),(70:\r),(95:\r),(145:\r),(120:\r)}
\draw[teal,fill=black] \point circle(.1);
\end{tikzpicture}}\qquad 
\resizebox{3.5cm}{!}{\begin{tikzpicture}[thick]
\def\r{2} 
\shadedraw[left color=black!50!white, right color=black!10!white, draw=black!10!white] (-120:\r)--(70:\r)--(95:\r);

\draw (0,0) circle(\r)
(120:\r)--(-5:\r)
(120:\r)--(-30:\r)
(120:\r)--(-55:\r);

\path
(70:\r) node[above right]{$v_{k+2}$}
(95:\r) node[above=1mm]{$v_{k+1}$}
(120:\r) node[above left]{$v_k$}
(-5:\r) node[below right]{$v_{j_1}$}
(-30:\r) node[below right]{$v_{j_2}$}
(-55:\r) node[below right]{$v_{j_3}$}
(-120:\r) node[below left]{$v_i$};

\foreach \point in {(-30:\r),(-120:\r),(70:\r),(95:\r),(120:\r),(-5:\r),(-55:\r)}
\draw[teal,fill=black] \point circle(.1);
\end{tikzpicture}} 
\caption{(Left) Bridges produced by $\{v_i,v_j\}$ where $v_i \to v_k \to v_j$. (Right) The same bridge $e=\{v_i,v_{k+1},v_{k+2}\}$ is produced by all of the pairs $\{v_i,v_{j_k}\}$ where $k=1,2,3$.}\label{fig:myfigure}
\end{figure}

\begin{lemma}\label{lem:main lem}
Suppose $\mathcal{C}$ is a maximal Berge cycle in $\cH$ with $|V(\mathcal{C})| \in \{n-1, n-2, n-3\}$.  The following holds for any $u \in V(\cH) - V( \mathcal{C})$ and usuable set $U_u$ with $|U_u| \geq d_{\cC}(u)/6$:
\begin{enumerate}[\normalfont(a)]
    \item $|U_u \cap S| \leq 3$.
    \item $|U_u \cap B|\geq |U_u|-3$.
    \item $|R(v_i,v_j)|\geq d_\cC(v_i)+d_\cC(v_j)-|V(\cC)|-4$ \  for all $v_i,v_j \in U_u$.
    \item $u \in S'$.
    \item $|U_u \cap B'|\leq 3$.
\end{enumerate}

\end{lemma}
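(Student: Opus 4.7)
The plan is to handle parts (a) and (b) together (since $S$ and $B$ partition $V(\cH)$, (b) is an immediate consequence of (a)); to prove (c) as the main technical tool via an Ore-type rotation argument; and to deduce (d) and (e) by a bridge-counting argument combining (c) with Observation~\ref{bridgeproduced} and the global bound~\eqref{numbridges}.

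\textbf{Parts (a) and (b).} I would suppose for contradiction that $|U_u\cap S|\geq 4$ and pick four vertices $v_{i_1},\dots,v_{i_4}\in U_u\cap S$. Since $|V(\cC)|\geq n-3$, equation \eqref{small} forces these to be pairwise adjacent, and Claim~\ref{uadjacent} requires each of the six adjacencies to be realized by an edge in $\{e_{i_a},e_{i_b}\}$. Because consecutive members of $U_u$ differ by at least $2$, $v_{i_k+1}\notin U_u$, so each edge $e_{i_k}$ can realize at most one of the six required adjacencies (through its possible third vertex). Four edges cannot witness six pairs, contradiction. Part (b) is immediate from (a) since $|U_u\cap B|=|U_u|-|U_u\cap S|$.

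\textbf{Part (c).} This is the heart of the argument. For $v_i,v_j\in U_u$, using the two distinct edges $e(u,v_{i-1})$ and $e(u,v_{j-1})$ guaranteed by usability, I would attempt to build a Berge cycle on $V(\cC)\cup\{u\}$ of length $|V(\cC)|+1$. Such a cycle decomposes into the two $u$-edges plus a Hamiltonian Berge path on $V(\cC)$ from $v_{i-1}$ to $v_{j-1}$ that traverses the two arcs of $\cC$ separated by $\{v_{i-1},v_{j-1}\}$ and is closed by a single crossing edge incident to $v_i$ or $v_j$. A shift-and-intersect count in the spirit of Ore's theorem provides at least $d_\cC(v_i)+d_\cC(v_j)-|V(\cC)|$ candidate indices $k$ where neighbors of $v_i$ and $v_j$ line up appropriately, and a careful case analysis shows that each failed extension forces the implicated cycle edge $e_{k-1}$ or $e_{k+1}$ into the exact structural form demanded by the definition of an $(i,j;k)$-bridge. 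Subtracting a boundary correction of up to $4$ for coincidences near $v_{i-1},v_i,v_{j-1},v_j$ and the edges $e_{i-2},e_{i-1},e_{j-2},e_{j-1}$ adjacent to them yields the claimed lower bound.

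\textbf{Parts (d) and (e).} These are pure double counting. For (d), assume $d_\cC(u)\geq n/4+\gamma_1$. Then $|U_u|\geq d_\cC(u)/6$ by hypothesis, and by (b), $|U_u\cap B|\geq d_\cC(u)/6-3$. Each pair in $U_u\cap B$ produces at least $(n+d_0)-|V(\cC)|-10\geq d_0-9$ bridges by (c). Double counting via Observation~\ref{bridgeproduced} yields at least $|U_u\cap B|(d_0-9)/2$ distinct bridges, which by \eqref{numbridges} is at most $n-1$; the resulting inequality $|U_u\cap B|\leq 2(n-1)/(d_0-9)$ contradicts the linear lower bound $|U_u\cap B|\geq n/24+\gamma_1/6-3$ provided $d_0$ and $\gamma_1$ are chosen sufficiently large, a numeric constraint easily absorbed by the intended choice $d_0=65$. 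Part (e) is analogous: four vertices in $U_u\cap B'$ would force at least $n+4\gamma_2-6$ distinct bridges, contradicting \eqref{numbridges} once $\gamma_2\geq 2$. The main obstacle is unquestionably part (c), where the directional casework ($v_i\to v_k\to v_j$ versus $v_j\to v_k\to v_i$) and the possible coincidences between the crossing edge and the boundary edges $e_{i-1},e_{i-2},e_{j-1},e_{j-2}$ must be tracked in detail to show that every failure to extend produces precisely an edge of the $(i,j;k)$-bridge form.
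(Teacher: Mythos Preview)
Your outline for parts (a), (b), (d), and (e) matches the paper's proof essentially line for line, including the bridge count $\frac{|U_u\cap B|}{2}(d_0-9)$ for (d) and $\frac{|U_u\cap B'|}{2}(\frac{n}{2}+2\gamma_2-3)$ for (e).

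Your sketch of (c), however, contains two concrete inaccuracies that would derail an attempt to carry it out. First, the extended Berge cycle is not ``closed by a single crossing edge incident to $v_i$ or $v_j$''; the construction requires \emph{two} crossing edges $f_i\in E(v_i,v_k)$ and $f_j\in E(v_j,v_{k+1})$ simultaneously (one on each arc), and the shift-and-intersect count is precisely what guarantees such a matched pair $(v_k,v_{k+1})$ exists. Second, your attribution of the $-4$ correction to ``coincidences near $v_{i-1},v_i,v_{j-1},v_j$ and the edges $e_{i-2},e_{i-1},e_{j-2},e_{j-1}$'' is off: none of those four cycle edges play a role. The $-4$ decomposes as $-2$ from excluding $v_j$ (resp.\ $v_i$) from $N_\cC(v_i)$ (resp.\ $N_\cC(v_j)$) before the shift count, plus $-2$ from what the paper calls \emph{pseudo-bridges}: the failure condition is $f_i\in\{e_{k-1},e_i\}$ or $f_j\in\{e_{k+1},e_j\}$, and while $f_i=e_{k-1}$ or $f_j=e_{k+1}$ gives a genuine $(i,j;k)$-bridge, the cases $f_i=e_i$ or $f_j=e_j$ do not. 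These exceptional cases are handled by observing that $e_i=\{v_i,v_{i+1},v_k\}$ can occur for at most one value of $k$ (and likewise for $e_j$), so at most two of the candidate indices are lost to pseudo-bridges. Without isolating and bounding the pseudo-bridge cases separately, the argument does not close.
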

\begin{proof}
We first prove (a). 
Suppose $U_u \cap S$ contains at least $4$ distinct points, $Z:=\{v_{i_1}, \ldots v_{i_4}\}$. Each of the ${4 \choose 2}$ pairs in $Z$ are adjacent by~\eqref{small}. Let $f_{j,k} \in E(v_{i_j}, v_{i_k})$. By Claim \ref{uadjacent}, for all $j,k \in [4]$, $f_{j,k} \subseteq \{e_{i_j}, e_{i_k}\}$.  
There are 4 edges in $\{e_{i_1}, \ldots, e_{i_4}\}$, each of which contains at most one pair in $Z$ (since the $v_{i_j}$'s are nonconsecutive in $\cC$ by definition of usable). That is, each $e_{i_t}$ corresponds to at most one edge $f_{j,k}$. This contradicts that there are ${4 \choose 2} > 4$ pairs in $Z$.



Item (b) follows directly from (a) since $|U_u|=|U_u \cap S|+|U_u \cap B|$.
Now we prove (c). Fix $v_i, v_j \in U_u$ and define $N'_{\cC}(v_i) = N_{\cC}(v_i) - \{v_j\}, N'_{\cC}(v_j) = N_{\cC}(v_j) - \{v_i\}$, \[I_1 = \{k: v_i \to v_k \to v_j \text{ and } v_{k} \in N'_{\cC}(v_i)\}, \; I_2 = \{k: v_j \to v_k \to v_i \text{ and } v_{k} \in N'_{\cC}(v_i)\},\]
\[J_1 = \{k: v_i \to v_k \to v_j \text{ and } v_{k} \in N'_{\cC}(v_j)\}, \; J_2 = \{k: v_j \to v_k \to v_i \text{ and } v_{k} \in N'_{\cC}(v_j)\}.\]

 Observe $|I_1| + |I_2| =|N'_{\cC}(v_i)| \geq d_{\cC}(v_i) - 1$ with equality only if $v_j \in N_{\cC}(v_i$). Similarly, $|J_1| + |J_2| = |N'_{\cC}(v_j)| \geq d_{\cC}(v_j)-1$. 

We have
\[|I_1^- \cap J_1| + |I_2 \cap J_2^-| \geq d_{\cC}(v_i) + d_{\cC}(v_j) - 2 - |I_1^- \cup J_1| -  |I_2 \cup J_2^-| \geq d_{\cC}(v_i) + d_{\cC}(v_j) - 2 - |V(\cC)|.\] 

Suppose $v_k \in I_2 \cap J_2^-$. (The case for $v_k \in I_1^- \cap J_1$ is symmetric so we omit it.) This implies $v_j \to v_k \to v_i$, $v_{k+1} \in N'_{\cC}(v_j)$, and $v_k \in N'_{\cC}(v_i)$. Note that $v_{k+1} \neq v_i$ and $v_{k} \neq v_j$ by the definitions of $N'_{\cC}(v_j), N'_{\cC}(v_i)$. 

Since $U_u$ is usable, there exists distinct edges $e(u, v_{i-1}) \in E(u,v_{i-1})-\{e_{i-2}\}$ and $e(u, v_{j-1}) \in E(u,v_{j-1})-\{e_{j-2}\}$.
Moreover, there also exists distinct edges $f_i \in E(v_i, v_k), f_j \in E(v_j, v_{k+1})$. Note that since $\cH$ is a $[3]$-graph and $k+1 \neq i$, we have $f_i,f_j,e(u, v_{i-1}),$ and $e(u, v_{j-1})$ are four distinct edges unless $f_j = e(u,v_{i-1})$ ($f_i \neq e(u, v_{j-1})$ since $v_j \to v_k \to v_{i}$). If $f_j = e(u,v_{i-1})$, we have $v_{k+1} = v_{i-1}$ and $f_j = e(u,v_{i-1})= \{u,v_{i-1}, v_j\} \notin E(\cC)$. Then we get a Berge cycle
\[v_{j-1}, e(v_{j-1}, u), f_j, v_j, e_j, v_{j+1}, e_{j+1}, v_{j+2}, \ldots, v_{j-1}\]
containing $V(\cC) \cup \{u\}$. This contradicts the maximality of $\cC$. 
Thus we may assume that the four edges are distinct.

If $f_i \notin \{e_{k-1}, e_{i}\}$ and $f_j \notin \{e_{k+1}, e_j\}$, then we obtain the Berge cycle  (see Figure \ref{fig:lem2path})
\[v_j, e_{j}, \ldots, v_k, f_i, v_i, e_{i}, v_{i+1}, \ldots, v_{j-1}, e(v_{j-1}, u), u, e(v_{i-1}, u), v_{i-1}, e_{i-2}, \ldots, v_{k+1}, f_j, v_j\]
containing $V(\cC) \cup \{u\}$ which again contradicts the maximality of $\cC$.

\begin{figure}[htb]
\centering
\resizebox{3.5cm}{!}{\begin{tikzpicture}[thick]
\def\r{2} 
\draw (0,0) circle(\r)
(120:\r)--(-30:\r)
(95:\r)--(-120:\r)
(0,0)--(-95:\r)
(0,0)--(-5:\r);

\path
(95:\r) node[above=1mm]{$v_{k+1}$}
(120:\r) node[above left]{$v_k$}
(-30:\r) node[below right]{$v_i$}
(-120:\r) node[below left]{$v_j$}
(0,0) node [above=1mm] (v1) {$u$}
(-95:\r) node[below=1mm]{$v_{j-1}$}
(-5:\r) node[right]{$v_{i-1}$};

\foreach \point in {(-30:\r),(-120:\r),(95:\r),(120:\r),(0,0),(-95:\r),(-5:\r)}
\draw[teal,fill=black] \point circle(.1);
\draw[red, ultra thick] (-95:\r) arc (-95:-33:2);
\draw[red, ultra thick]  (95:\r)arc (95:-5:2);
\draw[red, ultra thick](-1,1.7321) arc (119.9993:240:2);
\draw[red, ultra thick](95:\r)--(-120:\r);
\draw[red, ultra thick](120:\r)--(-30:\r);
\draw[red, ultra thick](0,0)--(-95:\r);
\draw[red, ultra thick](0,0)--(-5:\r);
\end{tikzpicture}}
\caption{$v_i, v_j \in U_u$, $f_i \notin \{e_{k-1},e_i\}$, and $f_j \notin \{e_{k+1},e_j\}$.}\label{fig:lem2path}
\end{figure}
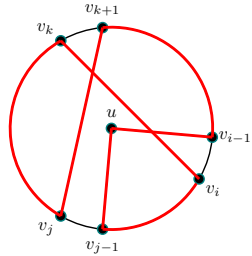

Hence $f_i \in \{e_{k-1}, e_{i}\}$ or  $f_j \in \{e_{k+1}, e_j\}$. We say $f_i$ is a {\em $(i,j;k)$-pseudo-bridge} if $f_i = e_i$, and similar for $f_j$ if $f_j=e_j$. Thus the pair $\{v_i, v_j\}$ produces either a $(i,j;k)$-bridge or a $(i,j;k)$-pseudo-bridge. Moreover, if say $f_i$ is a pseudo-bridge, then $f_i = e_i = \{v_i, v_{i+1}, v_k\}$, and hence $e_i$ is not a $(i,j;k')$-pseudo-bridge for any $k' \neq k$. 

Therefore at most two $v_k \in I_2 \cap J_2^-$ have $(i,j;k)$-pseudo-bridges (one for $e_i$ and one for $e_j)$. The remaining $v_k$ correspond to distinct $(i,j;k)$-bridges. Similar holds for $v_k \in I_1^- \cap J_1$. Altogether the pair $\{v_i,v_j\}$  produces at least $d_{\cC}(v_i) + d_{\cC}(v_j) - |V(\cC)| - 2 - 2$ bridges. This concludes the proof of (c). 
%



Next we will prove (d).  We count the number of bridges produced by pairs of big vertices in $U_u \cap B$. By Observation~\ref{bridgeproduced} (applied to $U_u \cap B$) and the fact $|V(\cC)| \leq n-1$, 

\begin{align*}
\bigg|\bigcup_{\{v_i, v_j\} \in U_u \cap B} R(v_i, v_j)\bigg| &\geq \frac{1}{|U_u \cap B|-1} \sum_{v_i,v_j \in UP_u\cap B} (d_{\cC}(v_i)+d_{\cC}(v_j)-|V(\cC)|-4)
\\
&\geq \frac{1}{|U_u \cap B|-1} \sum_{v_i,v_j \in U_u\cap B}(n+d_0 - 3 - 3 - (n-1) -4)\\
&\geq \frac{1}{|U_u \cap B|-1} \sum_{v_i,v_j \in U_u\cap B}(d_0-9)\\
&=\frac{(d_0-9)}{2}|U_u\cap B|.\\
\end{align*}

Suppose $u \notin S'$, i.e., 
$d_{\cC}(u) \geq \frac{n}{4}+\gamma_1$.
Since $|U_u \cap B| \geq |U_u|-3 \geq \frac{d_{\cC}(u)}{6}-3$, we have 

\[\frac{(d_0-9)}{2}|U_u\cap B|\geq \frac{d_0-9}{2}\left(\frac{d_{\cC}(u)}{6}-3\right) 
\geq \frac{d_0-9}{2}\left(\frac{n}{24}+\frac{\gamma_1}{6}-3\right) 
\geq n.\]

Here we assume 
\begin{constrain} \label{constrain1}
 $d_0\geq 57$ and $\gamma_1\geq 18$.
\end{constrain}

Therefore $|\bigcup_{\{v_i, v_j\} \in U_u \cap B} R(v_i, v_j)| \geq n$, contradicting~\eqref{numbridges}. Hence, $u \in S'$.

Finally we will prove (e). We count the number of bridges produced by pairs of super big vertices in $U_u \cap B'$. By Observation~\ref{bridgeproduced} (applied to $U_u \cap B'$) and the fact $|V(\cC)| \leq n-1$,

\begin{align*}
\bigg|\bigcup_{\{v_i, v_j\} \in U_u \cap B'} R(v_i, v_j)\bigg| &\geq \frac{1}{|U_u \cap B'|-1} \sum_{v_i,v_j \in U_u\cap B'} (d_{\cC}(v_i)+d_{\cC}(v_j)-|V(\cC)|-4)\\
&\geq \frac{1}{|U_u \cap B'|-1} \sum_{v_i,v_j \in U_u\cap B'}\left(\frac{3n}{4}+\gamma_2+ \frac{3n}{4}+\gamma_2 -(n-1)-4\right)\\
&\geq \frac{1}{|U_u \cap B'|-1} \sum_{v_i,v_j \in U_u\cap B'}\left(\frac{n}{2}+2\gamma_2-3\right)\\
&=\frac{|U_u \cap B'|}{2} \left( \frac{n}{2}+2\gamma_2-3\right).\\
\end{align*}

Suppose $|U_u \cap B'|>3$. Then we have,

\[\frac{|U_u \cap B'|}{2} \left( \frac{n}{2}+2\gamma_2-3\right)\geq \frac{4}{2}\left(\frac{n}{2}+2\gamma_2-3\right) \geq n.\]

Here we assume
\begin{constrain} \label{constrain2}
 $\gamma_2\geq \frac{3}{2}$.
\end{constrain}

Therefore $|\bigcup_{\{v_i, v_j\} \in U_u \cap B'} R(v_i, v_j)| \geq n$, contradicting~\eqref{numbridges}. Hence,
$|U_u \cap B'| \leq 3$.
\end{proof}

\section{Proof of Theorem \ref{mainthm}}

Let $\mathcal{C}$ be a longest Berge cycle and suppose $\cC$ is not Hamiltonian. Then $\cC$ is maximal and $|V(\cC)| \in \{n-1,n-2\}$ by Claim \ref{n-1n-2}. Let $u \in V(\cH) - V(\mathcal{C})$ and fix a usable set $U_u$ with $|U_u| \geq d_{\cC}(u)/6$, which exists by Lemma~\ref{lem: usableset}. Define \[B_1=B \setminus B'=\left\{v: \frac{n+d_0}{2}-3 \leq d_\cC(v) < \frac{3n}{4}+\gamma_2\right\}.\] By Lemma \ref{lem:main lem}(a) and (e), we have $|U_u \cap B_1|\geq |U_u|-6$.

Moreover, for any $u \in V(\cH) - V(\cC)$, Lemma~\ref{lem:main lem}(d) implies $ u \in S'(\cC)$, and so for any $v \in S(\cC) \cup B_1(\cC)$, 
\[d_{\partial\cH}(u) + d_{\partial\cH}(v) \leq d_{\cC}(u) + 2 + d_{\cC}(v) + 2 < \frac{n}{4} +\gamma_1+ 2+  \frac{3n}{4} +\gamma_2+ 2 \leq n+d_0.\]

Here we assume
\begin{constrain}\label{constrain3}
$\gamma_1+\gamma_2+4 \leq d_0$.
\end{constrain}

We obtain the following.
\begin{equation}\label{s'b'}
    \mbox{If $u \in V(\cH) - V(\cC) $ and $v \in S(\cC) \cup B_1(\cC)$, then $u$ and $v$ are adjacent.}
\end{equation}

We show the following fact to be used several times throughout the rest of the proof.
\begin{claim}\label{vivjcycle}
    For any $Y \subset V(\cC) - S'(\cC)$ with $|Y| \leq 2$, there does not exist a Berge cycle $\cC'$ with $V(\cC) \cup \{u\} - Y \subseteq V(\cC')$.
\end{claim}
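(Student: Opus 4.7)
The plan is a proof by contradiction. Suppose such a Berge cycle $\cC'$ exists; by passing to a maximal Berge cycle whose vertex set contains $V(\cC')$, I may assume $\cC'$ itself is maximal, since extending only adds vertices and preserves the inclusion $V(\cC)\cup\{u\}-Y\subseteq V(\cC')$. Because $\cC$ is a longest Berge cycle in $\cH$, we have $|V(\cC')|\leq|V(\cC)|\leq n-1$, so $\cC'$ is not Hamiltonian, and since $|V(\cC')|\geq|V(\cC)|+1-|Y|\geq n-3$, the cycle $\cC'$ falls in the range where Lemma~\ref{lem:main lem} applies.

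Next I would locate a vertex $y\in Y$ with $y\notin V(\cC')$. From $V(\cC')\supseteq V(\cC)\cup\{u\}-Y$ one computes $V(\cH)-V(\cC')\subseteq (V(\cH)-V(\cC)-\{u\})\cup Y$. When $|V(\cC)|=n-1$ the first piece is empty, so $V(\cH)-V(\cC')\subseteq Y$; if this were empty $\cC'$ would be Hamiltonian, a contradiction. When $|V(\cC)|=n-2$, write $\{w\}=V(\cH)-V(\cC)-\{u\}$: if $V(\cH)-V(\cC')=\emptyset$ we again get a Hamiltonian Berge cycle, and if $V(\cH)-V(\cC')=\{w\}$ then $|V(\cC')|=n-1$, contradicting that $\cC$ is longest. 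In every remaining case $V(\cH)-V(\cC')$ contains some $y\in Y$.

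Applying Lemma~\ref{lem:main lem}(d) to the maximal cycle $\cC'$ and the outside vertex $y$, with a usable set furnished by Lemma~\ref{lem: usableset}, yields $y\in S'(\cC')$, hence $d_{\cC'}(y)<\frac{n}{4}+\gamma_1$. Conversely, $y\in V(\cC)-S'(\cC)$ gives $d_\cC(y)\geq\frac{n}{4}+\gamma_1$. I would then compare the two via
\[d_{\cC'}(y)-d_\cC(y)=|N(y)\cap(V(\cC')-V(\cC))|-|N(y)\cap(V(\cC)-V(\cC'))|\geq [u\in N(y)]-(|Y|-1),\]
using $u\in V(\cC')-V(\cC)$ and $V(\cC)-V(\cC')\subseteq Y$ together with $y\notin N(y)$. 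Since $|Y|\leq 2$, the adjacency $u\in N(y)$ alone forces $d_{\cC'}(y)\geq d_\cC(y)\geq\frac{n}{4}+\gamma_1$, contradicting $y\in S'(\cC')$.

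The main obstacle is verifying that $u$ and $y$ are adjacent. If $y\in (S(\cC)\setminus S'(\cC))\cup B_1(\cC)$ this is exactly \eqref{s'b'}. The remaining case $y\in B'(\cC)$ uses the degree-sum hypothesis: non-adjacency would give $d_{\partial\cH}(u)\geq n+d_0-d_{\partial\cH}(y)\geq \frac{n}{4}+d_0-\gamma_2$, while Lemma~\ref{lem:main lem}(d) applied to $\cC$ and $u$ forces $u\in S'(\cC)$ and hence $d_{\partial\cH}(u)\leq d_\cC(u)+|V(\cH)-V(\cC)|<\frac{n}{4}+\gamma_1+2$; the resulting inequality $d_0\leq\gamma_1+\gamma_2+2$ violates Constraint~\ref{constrain3}. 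Thus $u\in N(y)$ and the contradiction is complete.
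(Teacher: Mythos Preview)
Your overall architecture is sound and is essentially the contrapositive form of the paper's iterative argument: rather than repeatedly invoking Lemma~\ref{lem:main lem}(d) to grow the cycle past $|V(\cC)|$, you extend $\cC'$ to a maximal cycle once and then apply Lemma~\ref{lem:main lem}(d) a single time to force $y\in S'(\cC')$, which you then contradict. That is a clean variant, and your case analysis locating some $y\in Y\setminus V(\cC')$ is correct.

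There is, however, a genuine gap in your treatment of the case $y\in B'(\cC)$. You write
\[
d_{\partial\cH}(u)\;\geq\; n+d_0-d_{\partial\cH}(y)\;\geq\;\tfrac{n}{4}+d_0-\gamma_2,
\]
but the second inequality requires $d_{\partial\cH}(y)\leq \tfrac{3n}{4}+\gamma_2$, and membership in $B'(\cC)$ gives only a \emph{lower} bound $d_\cC(y)\geq \tfrac{3n}{4}+\gamma_2$; nothing prevents $d_{\partial\cH}(y)$ from being as large as $n-1$. So this chain of inequalities does not go through, and you have not established $u\in N(y)$ in this case.

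The repair is simple and is exactly what the paper does: when $y\in B'(\cC)$ you do not need adjacency with $u$ at all. From your own degree comparison,
\[
d_{\cC'}(y)\;\geq\; d_\cC(y)-(|Y|-1)\;\geq\;\tfrac{3n}{4}+\gamma_2-1,
\]
and since $n\geq d_0+4$ this already exceeds $\tfrac{n}{4}+\gamma_1$, contradicting $y\in S'(\cC')$. In other words, split according to whether $u\in N(y)$: if yes, your inequality gives $d_{\cC'}(y)\geq d_\cC(y)\geq \tfrac{n}{4}+\gamma_1$; if no, then \eqref{s'b'} forces $y\in B'(\cC)$ and the bound above applies. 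Either way $y\notin S'(\cC')$, and your contradiction stands.
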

\begin{proof}
If such a Berge cycle $\cC'$ exists, then $|V(\cC')| \geq |V(\cC)| + 1 - |Y| \geq n-3$. For any $y \in Y$, $d_{\cC'}(y) \geq d_{\cC}(y) -1$ with equality only if $y$ and $u$ are not adjacent. By~\eqref{s'b'}, $y \notin N(u)$ implies $y \in B'(\cC)$. Therefore in either case $d_{\cC'}(y) \geq n/4 + \gamma_1$, i.e., $y \notin S'(\cC')$. 

Lemma~\ref{lem:main lem} (d) implies $\cC'$ is not maximal. Then there exists a Berge cycle $\cC''$ with $V(\cC') \subsetneq V(\cC'')$. If $Y \subseteq V(\cC'')$, then $|V(\cC'')| > |V(\cC)|$, contradicting the choice of $\cC$ as a longest Berge cycle. Otherwise if say $y \in Y- V(\cC'')$, then again by Lemma~\ref{lem:main lem}(d), $\cC''$ is not maximal and we may find another Berge cycle $\cC'''$ which is longer than $\cC$. \end{proof}

Let $U_u' = U_u \cap B_1(\cC)$.  We consider the set $X = \{v_{i+1}: v_i \in U_u'\} (=U_u'^+)$. 

\begin{claim}\label{v_i+1}
    If $v_{i+1} \in X \cap N(u)$, then $E(u,v_{i+1}) = \{e_{i+1}\}$.
\end{claim}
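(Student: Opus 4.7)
The plan is to proceed by contradiction: assume there exists $f \in E(u, v_{i+1}) - \{e_{i+1}\}$. In every case I will produce either a Berge cycle of length $|V(\cC)| + 1$ (contradicting the maximality of $\cC$) or a Berge cycle containing $V(\cC) \cup \{u\} - \{v_i\}$ (contradicting Claim~\ref{vivjcycle} with $Y = \{v_i\}$, which is legal because $v_i \in B_1 \subseteq V(\cC) - S'(\cC)$). Throughout, since $v_i \in U_u$, I have some edge $e(u, v_{i-1}) \in E(u, v_{i-1}) - \{e_{i-2}\}$ available.

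First, if $f = e_i$, then $|e_i| = 3$ forces $e_i = \{v_i, v_{i+1}, u\}$, and the cycle
\[u, e_i, v_{i+1}, e_{i+1}, v_{i+2}, \ldots, v_{i-1}, e(u, v_{i-1}), u\]
covers $V(\cC) \cup \{u\} - \{v_i\}$. Otherwise $f \neq e_i, e_{i+1}$, and if some choice of $e(u, v_{i-1}) \in E(u, v_{i-1}) - \{e_{i-2}\}$ is distinct from $f$, then the cycle
\[u, f, v_{i+1}, e_{i+1}, v_{i+2}, \ldots, v_{i-1}, e(u, v_{i-1}), u\]
again covers $V(\cC) \cup \{u\} - \{v_i\}$. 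In both cases the listed edges are easily seen to be distinct (any $e_k \in E(\cC)$ that contains $u$ and $v_{i-1}$, resp.\ $u$ and $v_{i+1}$, must be $e_{i-2}$ or $e_{i-1}$, resp.\ $e_i$ or $e_{i+1}$), and Claim~\ref{vivjcycle} delivers the contradiction.

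Suppose instead that every $e(u, v_{i-1}) \in E(u, v_{i-1}) - \{e_{i-2}\}$ equals $f$; then $f = \{u, v_{i-1}, v_{i+1}\}$ and $E(u, v_{i-1}) - \{e_{i-2}\} = \{f\}$. I split on whether $u \in e_{i-2}$, $u \in e_{i+1}$, or neither. If $u \in e_{i-2}$, so $e_{i-2} = \{v_{i-2}, v_{i-1}, u\}$, then
\[u, f, v_{i-1}, e_{i-1}, v_i, e_i, v_{i+1}, e_{i+1}, \ldots, v_{i-2}, e_{i-2}, u\]
is a Berge cycle using $f$ together with all $|V(\cC)|$ cycle edges; these are distinct because $e_k = f$ would force both $v_{i-1}$ and $v_{i+1}$ into one cycle edge, impossible as they are non-consecutive. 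Its length is $|V(\cC)| + 1$, contradicting maximality. The case $u \in e_{i+1}$ is symmetric, via the analogous cycle $u, f, v_{i+1}, e_i, v_i, e_{i-1}, v_{i-1}, e_{i-2}, \ldots, v_{i+2}, e_{i+1}, u$.

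The remaining hardest subcase is $u \notin e_{i-2}$ and $u \notin e_{i+1}$; here one also obtains $u \notin e_{i-1}, e_i$, since these cycle edges differ from $f$ yet would otherwise belong to $E(u, v_{i-1}) = \{f\}$ or $E(u, v_{i+1}) = \{f\}$. The key ingredient is~\eqref{s'b'}: because $v_i \in B_1$ and $u \in V(\cH) - V(\cC)$, the pair $\{u, v_i\}$ is adjacent, so I may pick $h \in E(u, v_i)$. Any cycle edge $e_k$ containing both $u$ and $v_i$ would be $e_{i-1}$ or $e_i$, both ruled out, so $h \notin E(\cC)$; and $h \neq f$ since $v_i \notin f$. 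Then
\[v_1, e_1, \ldots, v_{i-1}, f, u, h, v_i, e_i, v_{i+1}, \ldots, v_\ell, e_\ell, v_1\]
inserts $u$ between $v_{i-1}$ and $v_i$ by replacing $e_{i-1}$ with the two fresh edges $f$ and $h$, yielding a Berge cycle of length $|V(\cC)| + 1$ that contradicts the maximality of $\cC$.
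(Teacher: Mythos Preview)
There is a small but real gap in Case~3c. You assert $E(u,v_{i+1})=\{f\}$ in order to conclude $u\notin e_i$, but nothing in the setup gives this: we only know $f\in E(u,v_{i+1})-\{e_{i+1}\}$. It is perfectly possible that $u\in e_i$, i.e.\ $e_i=\{v_i,v_{i+1},u\}$, in which case $e_i\in E(u,v_{i+1})$ and $e_i\neq f$. If moreover $E(u,v_i)=\{e_i\}$, then your choice of $h$ is forced to be $e_i$, and your final cycle uses $e_i$ twice (once as $h$ and once as the edge $v_i\,e_i\,v_{i+1}$), so it is not a valid Berge cycle.

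The repair is easy. If $u\in e_i$, then
\[
u,\;e_i,\;v_{i+1},\;e_{i+1},\;\ldots,\;v_{i-1},\;f,\;u
\]
is a Berge cycle on $V(\cC)\cup\{u\}-\{v_i\}$ (since $f\notin E(\cC)$ and $e_{i-1}$ is omitted), contradicting Claim~\ref{vivjcycle}. Equivalently, one can insert $u$ between $v_i$ and $v_{i+1}$ via $\ldots,v_i,e_i,u,f,v_{i+1},\ldots$ to get a cycle on $V(\cC)\cup\{u\}$ outright.

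For comparison, the paper's argument avoids the whole 3a/3b/3c split. After reducing to $f=e(u,v_{i-1})=\{u,v_{i-1},v_{i+1}\}\notin E(\cC)$, it simply picks $f'\in E(u,v_i)$ (using~\eqref{s'b'}) and replaces the segment $v_{i-1},e_{i-1},v_i,e_i,v_{i+1}$ by $v_{i-1},e_{i-1},v_i,f',u,f,v_{i+1}$ if $f'\neq e_{i-1}$ (yielding a longer cycle; $f'=e_i$ is fine since $e_i$ is dropped), or by $v_{i-1},e_{i-1},u,f,v_{i+1}$ if $f'=e_{i-1}$ (yielding a cycle on $V(\cC)\cup\{u\}-\{v_i\}$). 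Two subcases, and no need to track membership of $u$ in $e_{i-2},e_{i-1},e_i,e_{i+1}$ separately. Your separate Case~1 ($f=e_i$) is also unnecessary, since that situation is already covered by the Case~2 cycle: when $f=e_i$ one automatically has $e(u,v_{i-1})\neq f$, and $e_i$ is not among the edges $e_{i+1},\ldots,e_{i-2}$ used there.
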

\begin{proof} Suppose $v_{i+1} \in X \cap N(u)$. By~\eqref{s'b'} and the definition of usable, $u$ is adjacent to each of $v_{i-1}, v_i$.
    Suppose towards contradiction that $f \in E(v_{i+1}, u) - \{e_{i+1}\}$. 
    Let $e(u, v_{i-1}) \in E(u, v_{i-1}) - \{e_{i-2}\}$.
    
    If $f \neq e(u, v_{i-1})$, then the Berge cycle $\cC'= v_{i-1}, e(u,v_{i-1}), u, f, v_{i+1}, e_{i+1}, \ldots, v_{i-1}$ has vertex set $V(\cC)\cup \{u\} - \{v_i\}$. This contradicts Claim~\ref{vivjcycle}.

    Therefore $f=e(u, v_{i-1})$, i.e., $f = \{v_{i-1}, u, v_{i+1}\}$. 
 In particular, $f \notin E(\cC)$.
 Recall that $v_{i}$ is adjacent to $u$, and let $f' \in E(u,v_i)$. Let $\cC''$ be the Berge cycle obtained by replacing the segment $v_{i-1}, e_{i-1}, v_i, e_{i}, v_{i+1}$ in $\cC$ with $v_{i-1}, e_{i-1}, v_i, f', u, f, v_{i+1}$ if $f' \neq e_{i-1}$, or with $v_{i-1}, e_{i-1}, u, f, v_{i+1}$ if $f' = e_{i-1}$. 

 Then $\cC''$ is either longer than $\cC$, or $V(\cC'') = V(\cC)\cup \{u\} - \{v_i\}$. In the latter case,  $v_i \notin S'(\cC)$ which again contradicts Claim~\ref{vivjcycle}.
\end{proof}

\begin{claim}\label{uneigh}For every $v_{i+1} \in X$, either $v_{i+1} \in B'(\cC)$ or $v_{i+1} \in S'(\cC)$. \end{claim}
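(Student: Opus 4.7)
The plan is to argue by contradiction: assume some $v_{i+1} \in X$ lies in neither $B'(\cC)$ nor $S'(\cC)$, so that its shadow degree satisfies $n/4 + \gamma_1 \le d_\cC(v_{i+1}) < 3n/4 + \gamma_2$, placing $v_{i+1} \in (S\setminus S') \cup B_1 \subseteq S \cup B_1$. Since $u \in V(\cH) - V(\cC)$, hypothesis~\eqref{s'b'} forces $u$ and $v_{i+1}$ to be adjacent, and then the just-proved Claim~\ref{v_i+1} yields $E(u, v_{i+1}) = \{e_{i+1}\}$. Combined with the containment $\{v_{i+1}, v_{i+2}\} \subseteq e_{i+1}$, this pins down $e_{i+1} = \{u, v_{i+1}, v_{i+2}\}$.

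Next, using $v_i \in U_u$, fix an edge $e(u, v_{i-1}) \in E(u, v_{i-1}) - \{e_{i-2}\}$ guaranteed by the definition of a usable set. Build a new Berge cycle $\cC^{**}$ by replacing the segment $v_{i-1}, e_{i-1}, v_i, e_i, v_{i+1}, e_{i+1}, v_{i+2}$ of $\cC$ with $v_{i-1}, e(u, v_{i-1}), u, e_{i+1}, v_{i+2}$; in other words, drop the two vertices $v_i, v_{i+1}$ and splice in $u$ between $v_{i-1}$ and $v_{i+2}$. The key point is that, once $v_{i+1}$ is removed, the edge $e_{i+1} = \{u, v_{i+1}, v_{i+2}\}$ remains available to serve as a $u$--$v_{i+2}$ connector. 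Verifying that $\cC^{**}$ is a genuine Berge cycle reduces to checking that $e(u, v_{i-1})$ and $e_{i+1}$ are distinct from one another and from the untouched edges $e_j$ for $j \notin \{i-1, i, i+1\}$; each potential collision is ruled out by $|e|\le 3$ together with the fact that $|V(\cC)| \ge n-2 \ge 4$ keeps the indices $i-1, i+1, i+2$ distinct. The resulting cycle has $V(\cC^{**}) = V(\cC) \cup \{u\} - \{v_i, v_{i+1}\}$.

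Finally, apply Claim~\ref{vivjcycle} with $Y = \{v_i, v_{i+1}\}$. Since $v_i \in U_u' \subseteq B_1$ and $B_1 \cap S' = \emptyset$ (as $B_1 \subseteq B$, $S' \subseteq S$, and $B \cap S = \emptyset$), we have $v_i \notin S'(\cC)$; and $v_{i+1} \notin S'(\cC)$ by the contradiction assumption. Hence $Y \subset V(\cC) - S'(\cC)$ with $|Y| = 2$, so Claim~\ref{vivjcycle} forbids any Berge cycle whose vertex set contains $V(\cC) \cup \{u\} - Y = V(\cC^{**})$, contradicting the construction. Therefore $v_{i+1} \in B' \cup S'$, as required.

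The principal (and relatively mild) obstacle is the bookkeeping in the middle step: one must verify $e(u, v_{i-1}) \neq e_{i+1}$ and that neither of these edges coincides with any surviving $e_j$ of $\cC$. Each such check reduces to pigeonhole on the three slots of a hyperedge, using $e(u, v_{i-1}) \neq e_{i-2}$ from the usable-set definition. The conceptual heart of the argument is recognizing that Claim~\ref{v_i+1} already forces $e_{i+1}$ to have the rigid form $\{u, v_{i+1}, v_{i+2}\}$, which makes it essentially free to remove the pair $\{v_i, v_{i+1}\}$ and reinsert $u$ using pre-existing cycle edges.
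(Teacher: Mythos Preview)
Your proof is correct and follows essentially the same route as the paper: assume $v_{i+1}\notin B'\cup S'$, use \eqref{s'b'} and Claim~\ref{v_i+1} to pin down $e_{i+1}=\{u,v_{i+1},v_{i+2}\}$, build the Berge cycle $v_{i-1},e(u,v_{i-1}),u,e_{i+1},v_{i+2},\ldots,v_{i-1}$ on $V(\cC)\cup\{u\}-\{v_i,v_{i+1}\}$, and contradict Claim~\ref{vivjcycle} with $Y=\{v_i,v_{i+1}\}$. The only difference is that you spell out the edge-distinctness bookkeeping more explicitly than the paper does.
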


\begin{proof}
    Suppose $v_{i+1} \notin B'(\cC)$. By~\eqref{s'b'}, $v_{i+1}$ is adjacent to $u$. By Claim~\ref{v_i+1}, $E(u,v_{i+1}) = \{e_{i+1}\}$, i.e., $e_{i+1}=\{v_{i+1},v_{i+2},u\}$. If $v_{i+1} \notin S'(\cC)$, then the Berge cycle $v_{i-1}, e(u,v_{i-1}), u, e_{i+1}, v_{i+2}, e_{i+2}, \ldots, v_{i-1}$ has vertex set $V(\cC) \cup \{u\} - \{v_{i}, v_{i+1}\}$. This contradicts Claim~\ref{vivjcycle}.
\end{proof}

\begin{claim}\label{cla:S'}
$|X \cap S'(\cC)| \leq 1$.\end{claim}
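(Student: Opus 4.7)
The plan is to argue by contradiction. Suppose there exist distinct $v_{i+1}, v_{j+1} \in X \cap S'(\cC)$, so that $v_i, v_j \in U_u'$. I will construct a Berge cycle $\cC'$ with $V(\cC) \cup \{u\} - \{v_i, v_j\} \subseteq V(\cC')$. Since $v_i, v_j \in U_u' \subseteq B_1(\cC) \subseteq V(\cC) - S'(\cC)$ and $|\{v_i,v_j\}| = 2$, such a $\cC'$ contradicts Claim~\ref{vivjcycle}, forcing $|X \cap S'(\cC)| \leq 1$.

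First I would unpack the structure forced by the earlier claims. Both $v_{i+1}, v_{j+1}$ lie in $X \cap N(u)$ by \eqref{s'b'}, so Claim~\ref{v_i+1} gives $e_{i+1} = \{v_{i+1}, v_{i+2}, u\}$ and $e_{j+1} = \{v_{j+1}, v_{j+2}, u\}$. Since $|V(\cC)| \geq n-2$ and $v_{i+1}, v_{j+1} \in S'(\cC) \subseteq S(\cC)$, \eqref{small} supplies an edge $g \in E(v_{i+1}, v_{j+1})$. Finally, the usable definition supplies distinct edges $f_i \in E(u, v_{i-1}) - \{e_{i-2}\}$ and $f_j \in E(u, v_{j-1}) - \{e_{j-2}\}$.

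In the generic case $|i-j| \geq 3$ (WLOG $i < j$), I would take
\[\cC' : v_{i+1}, e_{i+1}, v_{i+2}, \ldots, v_{j-1}, f_j, u, f_i, v_{i-1}, e_{i-2}, \ldots, v_{j+2}, e_{j+1}, v_{j+1}, g, v_{i+1},\]
which uses the $\cC$-edges with indices outside $\{i-1, i, j-1, j\}$ together with $f_i, f_j, g$, yielding $|V(\cC)|-1$ edges on $|V(\cC)|-1$ vertices as needed. The verifications are routine: consecutive vertex pairs lie in their connecting edges by the explicit forms of $e_{i+1}, e_{j+1}, f_i, f_j, g$; and all listed edges are pairwise distinct since $f_i \neq f_j$ by the usable condition, $f_i$ (resp.\ $f_j$) can only collide with a $\cC$-edge by equaling $e_{i-1}$ (resp.\ $e_{j-1}$), which is outside the used index range, and $g$ can only collide with a $\cC$-edge by equaling $e_i$ or $e_j$, the only $\cC$-edges that can contain both $v_{i+1}$ and $v_{j+1}$ as verified by a short case analysis, and these are also outside the used range.

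The main obstacle I anticipate is the degenerate case $|i-j| = 2$: when, say, $j = i+2$, the arc from $v_{i+1}$ to $v_{j-1}$ collapses and the displayed expression does not parse. In that case I would substitute
\[\cC' : v_{i-1}, f_i, u, e_{i+1}, v_{i+1}, g, v_{i+3}, e_{i+3}, v_{i+4}, \ldots, v_{i-2}, e_{i-2}, v_{i-1},\]
which again omits exactly $\{v_i, v_j\} = \{v_i, v_{i+2}\}$ and passes an analogous edge-distinctness check (the used $\cC$-indices are $\{i+3, i+4, \ldots, i-2\}$, so $e_{i-1}, e_i, e_{i+1}, e_{i+2}$ are all free for reuse as $f_i$ or $g$ if needed). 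In either case we obtain a Berge cycle realizing the forbidden vertex set, contradicting Claim~\ref{vivjcycle}.
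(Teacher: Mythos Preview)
Your approach is essentially the paper's: assume two such vertices $v_{i+1}, v_{j+1}$ exist, use the usable-set edges at $v_{i-1}, v_{j-1}$ together with an edge $g \in E(v_{i+1}, v_{j+1})$ (supplied by~\eqref{small}) to build a Berge cycle on $V(\cC) \cup \{u\} - \{v_i, v_j\}$, and contradict Claim~\ref{vivjcycle}. Your generic cycle is the paper's cycle traversed from a different base point.

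The only substantive difference is the handling of the degenerate situation. The paper does not split on $|i-j|$; rather it notes that its cycle fails only when $g$ coincides with one of the usable edges $f_i, f_j$, which forces $j = i \pm 2$ and $g = \{u, v_{i+1}, v_{j+1}\}$, and in that subcase it exhibits a cycle strictly longer than $\cC$ (using that $e_{j-1} = e_{i+1} = \{v_{i+1}, v_{i+2}, u\}$). Your alternate construction for $j = i+2$ also works, but your edge-distinctness justification is misstated: you claim the used $\cC$-indices are $\{i+3, \ldots, i-2\}$ and hence $e_{i+1}$ is ``free,'' yet $e_{i+1}$ appears explicitly in your $\cC'$. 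This slip does not actually break the argument---$e_{i+1} = \{v_{i+1}, v_{i+2}, u\}$ cannot equal $f_i$ (it omits $v_{i-1}$) or $g$ (it omits $v_{i+3}$), and similarly $e_{i+3} = e_{j+1} = \{v_{i+3}, v_{i+4}, u\}$ cannot equal $g$---but the check should be stated correctly.
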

\begin{proof}
    Suppose towards contradiction that $v_{i+1}, v_{j+1} \in X \cap S'(\cC)$. By~\eqref{small} and~\eqref{s'b'}, $v_{i+1}$ and $v_{j+1}$ are adjacent and are each adjacent to $u$.  Let $f \in E(v_{i+1}, v_{j+1})$, and fix distinct edges $e(u,v_{i-1}) \in E(u,v_{i-1}) - \{e_{i-2}\}$ and  $e(u,v_{j-1}) \in E(u,v_{j-1}) - \{e_{j-2}\}$. 
    By Claim~\ref{v_i+1}, $e_{i+1} = \{u,v_{i+1}, v_{i+2}\} \neq f$, and similar for $e_{j+1}$. 
    If $f = e(u, v_{j-1}) (= \{u, v_{j-1}, v_{j+1}\}$), so $j = i+2$, the Berge cycle \[v_{j-1}, f, u, e_{j-1}, v_j, e_{j}, \ldots, v_{j-1}\] is longer than $\cC$, a contradiction. Here we use the fact that $e_{j-1} = e_{i+1} = \{v_{i+1}, v_{i+2}, u\}$.
    We obtain a similar contradiction if $f = e(u, v_{i-1})$, so we may assume the three edges $f, e(u,v_{i-1}), e(u,v_{j-1})$ are distinct.
    The Berge cycle
    \[v_{i-1}, e(u,v_{i-1}), u, e(u,v_{j-1}), v_{j-1}, \ldots, v_{i+1}, f, v_{j+1}, e_{j+1}, \ldots, v_{i-1}\]

    has vertex set $V(\cC) \cup \{u\} - \{v_i, v_j\}$ (See Figure \ref{fig:proof1path}), contradicting Claim~\ref{vivjcycle}.

    \end{proof}
\begin{figure}[htb]
\centering
\resizebox{3.5cm}{!}{\begin{tikzpicture}[thick]
\def\r{2} 
\draw (0,0) circle(\r)
(0,0)--(-10:\r)
(0,0)--(-105:\r)
(0,0)--(-130:\r)
(0,0)--(-155:\r)
(0,0)--(-35:\r)
(0,0)--(15:\r);

\shadedraw[left color=black!50!white, right color=black!10!white, draw=black!10!white] (0,0)--(-155:\r)--(-180:\r);
\shadedraw[left color=black!50!white, right color=black!10!white, draw=black!10!white] (0,0)--(-35:\r)--(-60:\r);

\path
(0,0) node [above=1mm] (v1) {$u$}
(-35:\r) node[below right]{$v_{i+1}$}
(-130:\r) node[below left]{$v_j$}
(-10:\r) node[right=1mm]{$v_i$}
(-105:\r)  node[below=1mm]{$v_{j-1}$}
(15:\r)  node[right=1mm]{$v_{i-1}$}
(-155:\r)  node[left=1mm]{$v_{j+1}$}
(-180:\r)  node[left=1mm]{$v_{j+2}$}
(-60:\r)  node[below right=1mm]{$v_{i+2}$};

\foreach \point in {(0,0),(-35:\r),(-130:\r),(-10:\r),(-105:\r),(15:\r),(-155:\r),(-180:\r),(-60:\r)}
\draw[teal,fill=black] \point circle(.1);
\draw[red,ultra thick] (0,0)--(-105:\r);
\draw[red,ultra thick] (0,0)--(15:\r);
\draw[red,ultra thick] (-155:\r)--(-35:\r);
\draw[red,ultra thick] (15:\r) arc (15:205:2);
\draw[red,ultra thick] (-35:\r) arc (-35:-105:2);
\end{tikzpicture}}
\caption{$v_{i+1},v_{j+1} \in X \cap S'(\cC)$.}\label{fig:proof1path}
\end{figure}

Recall that for a pair of vertices $v_i$ and $v_j$, $R(v_i,v_j)$ is the set of bridges produced by the pair $\{v_i,v_j\}$. In the next claim, we count $R(v_{i+1}, v_{j+1})$ among pairs of vertices in $X$.
The proof is very similar to that of Lemma~\ref{lem:main lem}(c).

\begin{claim}\label{X-bridge}
    Let $v_{i+1}, v_{j+1} \in X$. Then $|R(v_{i+1}, v_{j+1})| \geq d_{\cC}(v_{i+1}) + d_{\cC}(v_{j+1}) -6$.
\end{claim}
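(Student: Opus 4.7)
My plan is to adapt the argument of Lemma~\ref{lem:main lem}(c) to the pair $(v_{i+1},v_{j+1})$. The two structural features that make the stronger ``$-6$'' bound available here are: (i) since $v_{i+1},v_{j+1}\in X$, we have $v_i,v_j\in U_u'\subseteq U_u$, so the usable property produces distinct detour edges $e(u,v_{i-1})\in E(u,v_{i-1})-\{e_{i-2}\}$ and $e(u,v_{j-1})\in E(u,v_{j-1})-\{e_{j-2}\}$; and (ii) since $v_i,v_j\in U_u'\subseteq B_1\subseteq V(\cC)-S'(\cC)$ by Constraint~\ref{constrain00}, Claim~\ref{vivjcycle} forbids any Berge cycle whose vertex set contains $V(\cC)\cup\{u\}-\{v_i,v_j\}$. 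This is a strictly stronger contradiction tool than the maximality of $\cC$ used in Lemma~\ref{lem:main lem}(c).

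Concretely, following the setup of Lemma~\ref{lem:main lem}(c), I would define
\[I_1=\{k:v_{i+1}\to v_k\to v_{j+1},\ v_k\in N_\cC(v_{i+1})-\{v_{j+1}\}\},\quad I_2=\{k:v_{j+1}\to v_k\to v_{i+1},\ v_k\in N_\cC(v_{i+1})-\{v_{j+1}\}\},\]
and analogously $J_1,J_2$ with respect to $N_\cC(v_{j+1})-\{v_{i+1}\}$. For each $v_k\in I_2\cap J_2^-$ I would pick edges $f_i\in E(v_{i+1},v_k)$ and $f_j\in E(v_{j+1},v_{k+1})$ and attempt to assemble the Berge cycle
\[v_{j+1},\,e_{j+1},\,\ldots,\,v_k,\,f_i,\,v_{i+1},\,e_{i+1},\,\ldots,\,v_{j-1},\,e(u,v_{j-1}),\,u,\,e(u,v_{i-1}),\,v_{i-1},\,e_{i-2},\,\ldots,\,v_{k+1},\,f_j,\,v_{j+1}.\]
Whenever the four special edges $f_i,f_j,e(u,v_{i-1}),e(u,v_{j-1})$ are pairwise distinct and do not collide with the reused cycle edges, this is a valid Berge cycle with vertex set $V(\cC)\cup\{u\}-\{v_i,v_j\}$, contradicting Claim~\ref{vivjcycle}. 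Consequently, for each such $v_k$ the construction must fail: either $f_i=e_{k-1}$ (a bridge), $f_i\in\{e_i,e_{i+1}\}$ (a pseudo-bridge), $f_j=e_{k+1}$ (a bridge), $f_j\in\{e_j,e_{j+1}\}$ (a pseudo-bridge), or $f_i$ (resp.\ $f_j$) coincides with $e(u,v_{j-1})$ (resp.\ $e(u,v_{i-1})$). A symmetric analysis handles $v_k\in I_1^-\cap J_1$.

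The sharp constant $-6$ (as opposed to the $-|V(\cC)|-O(1)$ arising from the naive $|I_1^-\cap J_1|+|I_2\cap J_2^-|$ intersection count used in Lemma~\ref{lem:main lem}(c)) comes from partitioning $R(v_{i+1},v_{j+1})$ by the identity of the ``third'' vertex of each 3-edge: bridges containing $v_{i+1}$ as third vertex form a class disjoint from those containing $v_{j+1}$, so it suffices to show each class has size at least $d_\cC(v_{i+1})-3$ (resp.\ $d_\cC(v_{j+1})-3$). Exploiting the extra flexibility of Claim~\ref{vivjcycle} to delete $v_i,v_j$ from any candidate Berge cycle, I would argue that every $v_k\in N_\cC(v_{i+1})-\{v_{j+1}\}$, outside the three exceptions coming from the two pseudo-bridge edges $e_i,e_{i+1}$ (each usable for one $v_k$) and a single edge-coincidence configuration with $e(u,v_{j-1})$, yields a distinct bridge with third vertex $v_{i+1}$; summing this with the symmetric count for $v_{j+1}$ gives $d_\cC(v_{i+1})+d_\cC(v_{j+1})-6$.

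The step I expect to be the main obstacle is precisely this neighbor-by-neighbor accounting on each side. One must verify that for every $v_k$ outside the three declared exceptions, a genuinely new bridge arises, and in particular that all edge-coincidence subcases (where $f_i$ or $f_j$ equals one of the usable detour edges) contribute only a single exception rather than $O(1)$ or $O(|V(\cC)|)$. Handling these coincidences while keeping the exception count at exactly three per side is the crux of the improvement over Lemma~\ref{lem:main lem}(c).
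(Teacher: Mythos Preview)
The stated bound in Claim~\ref{X-bridge} is a typo: the paper's own proof actually establishes
\[
|R(v_{i+1},v_{j+1})|\ \ge\ d_{\cC}(v_{i+1})+d_{\cC}(v_{j+1})-|V(\cC)|-6,
\]
and this is exactly the bound that is plugged into the final bridge count a few lines later. Your first two paragraphs reproduce the paper's argument almost verbatim: define $I_1,I_2,J_1,J_2$ (the paper excludes $\{v_j,v_{j+1}\}$ and $\{v_i,v_{i+1}\}$ rather than just $\{v_{j+1}\}$ and $\{v_{i+1}\}$, which is a cosmetic difference), lower-bound $|I_1^-\cap J_1|+|I_2\cap J_2^-|$ via inclusion--exclusion by $d_{\cC}(v_{i+1})+d_{\cC}(v_{j+1})-4-|V(\cC)|$, build the Berge cycle on $V(\cC)\cup\{u\}-\{v_i,v_j\}$, invoke Claim~\ref{vivjcycle}, and lose two more for the pseudo-bridges $e_{i+1},e_{j+1}$. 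That part is fine and matches the paper.

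The third and fourth paragraphs, where you try to upgrade to a genuine $-6$ with no $|V(\cC)|$ loss, do not work. Your proposed mechanism is to show that all but three neighbours $v_k$ of $v_{i+1}$ individually force a bridge with third vertex $v_{i+1}$. But the only way the argument produces a contradiction (and hence forces a bridge) is by building a Berge cycle covering $V(\cC)\cup\{u\}-\{v_i,v_j\}$, and that construction requires \emph{both} $v_k\in N_{\cC}(v_{i+1})$ and $v_{k+1}\in N_{\cC}(v_{j+1})$ (or the symmetric crossing). A neighbour $v_k$ of $v_{i+1}$ for which $v_{k+1}\notin N_{\cC}(v_{j+1})$ gives you nothing: there is no cycle to assemble, no contradiction, and hence no bridge is forced. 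The $|V(\cC)|$ term is precisely the price of intersecting the shifted neighbourhoods, and Claim~\ref{vivjcycle} does not let you bypass it. (Also, $e_i$ is not a pseudo-bridge here: it is not used in the rerouted cycle since $v_i$ is skipped, so $f_i=e_i$ causes no conflict; only $e_{i+1}$ and $e_{j+1}$ are the two pseudo-bridge exceptions.) In short, drop the attempted sharpening and keep your first two paragraphs, which already give the bound the paper actually proves and uses.
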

\begin{proof}
 For $v_{i+1}, v_{j+1} \in X, $ define $N'_{\cC}(v_{i+1}) = N_{\cC}(v_{i+1}) - \{v_j, v_{j+1}\}, N'_{\cC}(v_{j+1}) = N_{\cC}(v_{j+1}) - \{v_i, v_{i+1}\}$, \[I_1 = \{k: v_{i+1} \to v_k \to v_{j+1} \text{ and } v_{k} \in N'_{\cC}(v_{i+1})\}, \; I_2 = \{k: v_{j+1} \to v_k \to v_{i+1} \text{ and } v_{k} \in N'_{\cC}(v_{i+1})\},\]
\[J_1 = \{k: v_{i+1} \to v_k \to v_{j+1} \text{ and } v_{k} \in N'_{\cC}(v_{j+1})\}, \; J_2 = \{k: v_{j+1} \to v_k \to v_{i+1} \text{ and } v_{k} \in N'_{\cC}(v_{j+1})\}.\]

 Note that $|I_1| + |I_2| =|N'_{\cC}(v_{i+1})| \geq d_{\cC}(v_{i+1}) - 2$ and similarly, $|J_1| + |J_2| = |N'_{\cC}(v_{j+1})| \geq d_{\cC}(v_{j+1})-2$. We have
\[|I_1^- \cap J_1| + |I_2 \cap J_2^-|\geq d_{\cC}(v_{i+1}) + d_{\cC}(v_{j+1}) - 4 - |I_1^- \cup J_1| -  |I_2 \cup J_2^-| \geq d_{\cC}(v_{i+1}) + d_{\cC}(v_{j+1}) - 4 - |V(\cC)|.\] 

Suppose $v_k \in I_2 \cap J_2^-$. (The case for $v_k \in I_1^- \cap J_1$ is symmetric so we omit it.) This implies $v_{j+1} \to v_k \to v_{i+1}$, $v_{k+1} \in N'_{\cC}(v_{j+1})$, and $v_k \in N'_{\cC}(v_{i+1})$. Note that $v_{k+1} \notin \{v_i,v_{i+1}\}$. 

By definition of usable set, there exists distinct edges $e(u, v_{i-1}) \in E(u,v_{i-1})-\{e_{i-2}\}$, $e(u, v_{j-1}) \in E(u,v_{j-1})-\{e_{j-2}\}$.
Moreover, there also exists distinct edges $f_{i+1} \in E(v_{i+1}, v_k)$ and $f_{j+1} \in E(v_{j+1}, v_{k+1})$. Note that since $\cH$ is a $[3]$-graph and $k+1 \neq i, i+1$, we have $f_{i+1},f_{j+1},e(u, v_{i-1}),$ and $e(u, v_{j-1})$ are distinct edges unless $f_{j+1} = e(u,v_{i-1})$ ($f_{i+1} \neq e(u,v_{j-1})$ since $v_{j+1} \to v_k \to v_{i+1}$). If $f_{j+1} = e(u,v_{i-1})$, we have $v_{k+1} = v_{i-1}$ and $f_{j+1} = \{u,v_{i-1}, v_{j+1}\} \notin E(\cC)$. Then we get a Berge cycle
\[v_{j-1}, e(u,v_{j-1}), u , f_{j+1}, v_{j+1}, e_{j+1}, \ldots, v_{j-1}\]
on the vertex set $V(\cC) \cup \{u\}-\{v_j\}$, contradicting Claim~\ref{vivjcycle}. 
Thus we may assume that the four edges are distinct.

If $f_{i+1} \notin \{e_{k-1}, e_{i+1}\}$ and $f_{j+1} \notin \{e_{k+1}, e_{j+1}\}$, then we obtain the Berge cycle (See Figure \ref{fig:proof2path})
\[v_{j+1}, e_{j+1}, \ldots, v_k, f_{i+1}, v_{i+1}, e_{i+1}, \ldots, v_{j-1}, e(u, v_{j-1}), u, e(u, v_{i-1}), v_{i-1}, e_{i-2}, v_{i-2}, \ldots, v_{k+1}, f_{j+1}, v_{j+1},\]
which has vertex set $V(\cC)\cup \{u\}-\{v_i,v_j\}$, again contradicting Claim~\ref{vivjcycle}. 
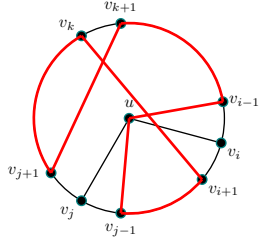
\begin{figure}[htb]
\centering
\resizebox{3.5cm}{!}{\begin{tikzpicture}[thick]
\def\r{2} 
\draw (0,0) circle(\r);
\draw (0,0)--(-120:\r);
\draw (0,0)--(-15:\r);

\path
(95:\r) node[above=1mm]{$v_{k+1}$}
(120:\r) node[above left]{$v_k$}
(-15:\r) node[below right]{$v_i$}
(-40:\r) node[below right]{$v_{i+1}$}
(-120:\r) node[below left]{$v_j$}
(-145:\r) node[left=1mm]{$v_{j+1}$}
(0,0) node [above=1mm] (v1) {$u$}
(-95:\r) node[below=1mm]{$v_{j-1}$}
(10:\r) node[right]{$v_{i-1}$};

\foreach \point in {(-15:\r),(-120:\r),(95:\r),(120:\r),(0,0),(-95:\r),(10:\r),(-145:\r),(-40:\r)}
\draw[teal,fill=black] \point circle(.1);
\draw[red, ultra thick] (10:\r) arc (10:95:2);
\draw[red, ultra thick]  (-145:\r)arc (-145:-240:2);
\draw[red, ultra thick]  (-40:\r) arc (-40:-95:2);
\draw[red, ultra thick](95:\r)--(-145:\r);
\draw[red, ultra thick](120:\r)--(-40:\r);
\draw[red, ultra thick](0,0)--(-95:\r);
\draw[red, ultra thick](0,0)--(10:\r);
\end{tikzpicture}}
\caption{$v_{i+1}, v_{j+1} \in X$, $f_{i+1} \notin \{e_{k-1},e_{i+1}\}$, and $f_{j+1} \notin \{e_{k+1},e_{j+1}\}$.}\label{fig:proof2path}
\end{figure}

Hence $f_{i+1} \in \{e_{k-1},e_{i+1}\}$ or $f_{j+1} \in \{e_{k+1},e_{j+1}\}$. We say $f_{i+1}$ is a {\em $(i+1,j+1;k)$-pseudo-bridge} if $f_{i+1} = e_{i+1}$, and similar for $f_{j+1}$ if $f_{j+1}=e_{j+1}$. Thus the pair $\{v_{i+1}, v_{j+1}\}$ produces either a $(i+1,j+1;k)$-bridge or a $(i+1,j+1;k)$-pseudo-bridge. If say $f_{i+1}$ is a pseudo-bridge, then $f_{i+1} = e_{i+1} = \{v_{i+1}, v_{i+2}, v_k\}$, and therefore $e_{i+1}$ is not a $(i+1,j+1;k')$-pseudo-bridge for any $k' \neq k$. 

Therefore at most two $v_k \in (I_2 \cap J_2^-) \cup (I_1^- \cap J_1)$ have $(i+1,j+1;k)$-pseudo-bridges. The remaining $v_k$ correspond to distinct $(i+1,j+1;k)$-bridges.  Altogether the pair $\{v_i,v_j\}$  produces at least $d_{\cC}(v_i) + d_{\cC}(v_j) - |V(\cC)| - 4 - 2$ bridges.
\end{proof}

\begin{claim}\label{cla:du}
$d_{\partial \cH}(u) \geq d_0+2$.    
\end{claim}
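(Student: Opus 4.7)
The plan is to prove this claim directly from the Ore-type degree-sum hypothesis together with the standing assumption $n \geq d_0 + 4$; none of the longest-cycle structure or the usable-set and bridge machinery developed in Section~2 is needed here.

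I would split into two cases depending on whether $u$ has any non-neighbor. If $u$ is adjacent to every other vertex of $\cH$, then $d_{\partial \cH}(u) = n - 1 \geq d_0 + 3 > d_0 + 2$ and the bound holds trivially. Otherwise there exists some $v \in V(\cH) - \{u\}$ non-adjacent to $u$; applying the Ore-type hypothesis to the non-adjacent pair $(u,v)$ yields $d_{\partial \cH}(u) + d_{\partial \cH}(v) \geq n + d_0$, and hence $d_{\partial \cH}(u) \geq n + d_0 - d_{\partial \cH}(v)$.

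The main point is to extract the extra $+1$ beyond the naive bound $d_{\partial \cH}(u) \geq d_0 + 1$ that the trivial inequality $d_{\partial \cH}(v) \leq n - 1$ would give. This sharpening comes for free from the observation that a non-neighbor $v$ of $u$ cannot have $u$ in its neighborhood, so in fact $d_{\partial \cH}(v) \leq n - 2$. Plugging this refined upper bound into the Ore inequality produces $d_{\partial \cH}(u) \geq n + d_0 - (n - 2) = d_0 + 2$, as claimed.

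There is essentially no obstacle. The only delicate point is recognizing that a non-neighbor of $u$ has degree at most $n - 2$ rather than $n - 1$, which is exactly what turns the off-by-one Ore bound into the sharp bound required by the claim. Consequently, I do not expect to need any of the constraints on $d_0$, $\gamma_1$, or $\gamma_2$ beyond the already-imposed $n \geq d_0 + 4$.
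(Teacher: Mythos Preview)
Your argument is correct and matches the paper's proof essentially line for line: both split into the cases where $u$ is adjacent to every other vertex (giving $d_{\partial \cH}(u)=n-1\geq d_0+3$) and where some $v$ is non-adjacent to $u$, then use $d_{\partial \cH}(v)\leq n-2$ together with the Ore condition to conclude $d_{\partial \cH}(u)\geq d_0+2$.
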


\begin{proof}
If $u$ is adjacent to all vertices, then $d_{\partial \cH}(u)=n-1 \geq d_0+3$. Otherwise suppose $v \in V(\cH)$ is not adjacent to $u$. We have $d_{\partial \cH}(u)+d_{\partial \cH}(v) \geq n+ d_0$. Since $d_{\partial \cH}(v) \leq n-2$, it follows $d_{\partial \cH}(u)\geq d_0+2$.   
\end{proof}

We are now ready to prove Theorem~\ref{mainthm}. Let $X' = \{v_{i+1} \in X: v_{i+1} \in B'(\cC)\}$. 
 By Claims~\ref{uneigh} and \ref{cla:S'}, $|X'| \geq |U_u'| - 1 = |U_u \cap B_1| - 1$. 
  We count the number of bridges produced by pairs of vertices in $X'$. By Claim \ref{X-bridge} and Observation~\ref{bridgeproduced} (applied to $X'$), 

\begin{align*}
|\bigcup_{\{v_{i+1}, v_{j+1}\} \in X'} R(v_{i+1}, v_{j+1})|  &\geq \frac{1}{|X'|-1} \sum_{\{v_{i+1}, v_{j+1}\} \in X'} (d_\cC(v_{i+1})+d_\cC(v_{j+1})-|V(\cC)|-6)\\
&\geq \frac{1}{|X'|-1} \sum_{\{v_{i+1}, v_{j+1}\} \in X'}\left(\frac{3n}{4}+\gamma_2+\frac{3n}{4}+\gamma_2-(n-1)-6\right)\\
&\geq \frac{1}{|X'|-1} \sum_{\{v_{i+1}, v_{j+1}\} \in X'}\left(\frac{n}{2}+ 2\gamma_2-5\right)\\
&\geq \frac{1}{|X'|-1} {|X'| \choose 2}\left(\frac{n}{2}+ 2\gamma_2-5\right).\\
&= \frac{|X'|}{2}\left(\frac{n}{2}+ 2\gamma_2-5\right)\\
& \geq \frac{(|U_u\cap B_1|-1)}{2}\left(\frac{n}{2}+ 2\gamma_2-5\right).\\
\end{align*}
Since
$|U_u \cap B_1| \geq |U_u|-6 \geq \frac{d_\cC(u)}{6}-6$
and $d_\cC(u)\geq d_{\partial \cH}(u)-1\geq d_0+1$,
we have

\[\frac{|U_u \cap B_1|-1}{2}\left(\frac{n}{2}+2\gamma_2-5\right)  \geq\frac{d_0-41}{12}\left(\frac{n}{2}+2\gamma_2-5\right) \geq n.\]

Here we assume
\begin{constrain}\label{constrain4}
 $d_0\geq 65$ and $\gamma_2\geq \frac{5}{2}$.
\end{constrain}

Now we choose $d_0=65$, $\gamma_1=18$, and $\gamma_2=13$ so that Constraints \ref{constrain0}, \ref{constrain00}, \ref{constrain1}, \ref{constrain2}, \ref{constrain3}, and \ref{constrain4},   are all satisfied.

Therefore $|\bigcup_{\{v_{i+1}, v_{j+1}\} \in X} R(v_{i+1}, v_{j+1})| \geq n$, contradicting~\eqref{numbridges}. This completes the proof. 
\qed

\section{Concluding remarks}
The $r$-uniform hypergraph $\cH'_{r}$ in Construction~\ref{const1} shows that for all $3 \leq r < n$, there exists non-Berge Hamiltonian $[r]$-graphs $\cH$ satisfying $d_{\partial \cH}(u) + d_{\partial \cH}(v) \geq n+r-3$ for all nonadajcent $u,v \in V(\cH)$. For $n$ large compared to $r$, we propose the following problem which generalizes Theorem~\ref{mainthm}.

\begin{question}\label{qr} Let $r \geq 3$ be a fixed integer. There exists $d_0 = d_0(r)$ and $n_0 
= n_0(r)$ such that for every $n \geq n_0$ the following holds: if $\cH$ is an $n$-vertex $[r]$-graph such that every pair of nonadjacent vertices $u, v \in V(\cH)$ satisfies
\[d_{\partial\cH}(u) + d_{\partial\cH}(v) \geq n + d_0,\] then $\cH$ contains a Hamiltonian Berge cycle. 
\end{question}

With our methods, even the case $r=4$ would be very difficult to prove.
Again by the example $\cH_{r}'$, $d_0 = r-2$ would be best possible. 
Lastly we note that $n_0 = \omega(r)$ is necessary due to the following example.

\begin{constr}Let $n, k \in \mathbb N$ such that ${k \choose 2}< n$ and $k \mid n$, and set $r = 2n/k$. Let $\cG_k$ be the $r$-graph with $V(\cG_{k}) = V_1 \cup V_2 \cup \ldots V_k$, $|V_i| = n/k$ for all $i \in [k]$, and $E(\cG_{k}) = \{V_i \cup V_j: i \neq j, i,j \in [k]\}$.
\end{constr}

One can view $\cG_{k}$ as a hypergraph blow-up of the clique $K_k$.
All vertices are adjacent, so $\cG_k$ trivially satisfies the Ore-condition in Question~\ref{qr} for any $d_0$. However $\cG_k$ has ${k \choose 2} < n$ edges, so it does not contain a Hamiltonian Berge cycle.

\end{document}